\def\ps@pprintTitle{%
 \let\@oddhead\@empty
 \let\@evenhead\@empty
 \def\@oddfoot{\centerline{\thepage}}%
 \let\@evenfoot\@oddfoot}
\newtheorem{theorem}{Theorem}
\newtheorem*{thmc}{Theorem C}
\newtheorem*{propa}{Proposition A}
\newtheorem*{propb}{Proposition B}
\newtheorem{definition}{Definition}
\newtheorem{lemma}{Lemma}
\newtheorem{proposition}{Proposition}
\newtheorem{remark}{Remark}
\newcommand*\xbar[1]{%
  \hbox{%
    \vbox{%
      \hrule height 0.5pt 
      \kern0.4ex
      \hbox{%
        \kern-0.15em
        \ensuremath{#1}%
        \kern-0.15em
      }%
    }%
  }%
}
\begin{document}

\begin{frontmatter}

\title{Branching Brownian motion under soft killing}

\author{Mehmet \"{O}z}
\ead{mehmet.oz@ozyegin.edu.tr}
\ead[url]{https://faculty.ozyegin.edu.tr/mehmetoz/}

\address{Department of Natural and Mathematical Sciences, Faculty of Engineering, \"{O}zye\u{g}in University, Istanbul, T\"urkiye}

\begin{abstract}
We study a $d$-dimensional branching Brownian motion (BBM) among Poissonian obstacles, where a random \emph{trap field} in $\mathbb{R}^d$ is created via a Poisson point process. In the soft obstacle model, the trap field consists of a positive potential which is formed as a sum of a compactly supported bounded function translated at the atoms of the Poisson point process. Particles branch at the normal rate outside the trap field; and when inside the trap field, on top of complete suppression of branching, particles are killed at a rate given by the value of the potential. Under soft killing, the probability that the entire BBM goes extinct due to killing is positive in almost every environment. Conditional on ultimate survival of the process, we prove a law of large numbers for the total mass of BBM among soft Poissonian obstacles. Our result is quenched, that is, it holds in almost every environment with respect to the Poisson point process.     
\end{abstract}

\vspace{3mm}

\begin{keyword}
Branching Brownian motion  \sep law of large numbers \sep Poissonian traps \sep random environment \sep soft killing
\vspace{3mm}
\MSC[2020] 60J80 \sep 60K37 \sep 60F05 \sep 60J65
\end{keyword}

\end{frontmatter}

\pagestyle{myheadings}
\markright{BBM under soft killing\hfill}

\section{Introduction}\label{intro}

In this work, we consider a model of a spatial random process in a random environment in $\mathbb{R}^d$, where the random process is a $d$-dimensional branching Brownian motion (BBM), and the random environment is created via a Poisson point process (PPP). We will call an environment in $\mathbb{R}^d$ \emph{Poissonian} if its randomness is created via a PPP. A random \emph{trap field} is formed as a positive potential which is given by the sum of a compactly supported, positive, bounded function translated at the atoms of the PPP. We specify the interaction between the BBM and the random trap field via the \emph{soft killing} rule: the particles are killed at a rate given by the value of the potential inside the trap field. Furthermore, the branching of particles is assumed to be completely suppressed inside the trap field whereas particles branch at a fixed rate outside the trap field. We call the model described here, the model of \emph{BBM among soft obstacles}. We study the growth of mass, that is, the population size, of a BBM evolving in a typical such random environment in $\mathbb{R}^d$. Clearly, the presence of traps tends to decrease the mass compared to a `free' BBM, that is, a BBM in $\mathbb{R}^d$ without any traps. The goal of this paper is to prove a law of large numbers on the reduced mass of the BBM among soft obstacles.  

The mass of BBM among random obstacles in $\mathbb{R}^d$ was first studied by Engl\"ander \cite{E2008}, where the random environment was composed of spherical traps of fixed radius with centers given by a PPP, and the interaction between the BBM and the trap field was given by the \emph{mild} obstacle rule: when a particle is inside the traps, it branches at a positive rate lower than usual and there is no killing of particles. Engl\"ander showed that on a set of full measure with respect to the PPP, a kind of law of large numbers holds (see [6, Theorem 1]) for the mass of the process. His result was later improved by \"Oz \cite{O2021} to a strong law of large numbers, including the case of zero branching in the trap field. In both aforementioned works, the challenging part of the proof was the lower bound of the law of large numbers. Under soft killing considered here, the proof of the lower bound is even more delicate as the system tends to produce fewer particles due to possible killing compared to the case of mild obstacles, and there is positive probability for the entire process to be killed by the trap field in finite time. Therefore, one has to condition the process on survival for meaningful results. It is more challenging to show that sufficiently many particles are produced with high probability under soft killing, because at each step of the proof one has to overcome the effect of possible killing of particles, which makes the analysis significantly more elaborate compared to the case of mild obstacles with zero branching inside the trap field.

\subsection{The model}

We now present the model in more detail. Firstly, we introduce the two sources of randomness, the BBM and the random trap field, and then we develop a model of a random process in random environment by specifying an interaction between the random components. 

\textbf{1. Branching Browian motion:} Let $Z=(Z_t)_{t\geq 0}$ be a strictly dyadic $d$-dimensional BBM with branching rate $\beta>0$, where $t$ represents time. The process can be described as follows. It starts with a single particle, which performs a Brownian motion in $\mathbb{R}^d$ for a random lifetime, at the end of which it dies and simultaneously gives birth to two offspring. Starting from the position where their parent dies, each offspring particle repeats the same procedure as their parent independently of others and the parent, and the process evolves through time in this way. All particle lifetimes are exponentially distributed with parameter $\beta>0$. For each $t\geq 0$, $Z_t$ can be viewed as a finite discrete measure on $\mathbb{R}^d$, which is supported at the positions of the particles at time $t$. We use $P_x$ and $E_x$, respectively, to denote the law and corresponding expectation of a BBM starting with a single particle at $x\in\mathbb{R}^d$. For $t\geq 0$ and a Borel set $A\subseteq\mathbb{R}^d$, we write $Z_t(A)$ to denote the mass of $Z$ falling inside $A$ at time $t$, and set $N_t=|Z_t|=Z_t(\mathbb{R}^d)$ to be the (total) mass of BBM at time $t$.

\textbf{2. Trap field:} The random environment in $\mathbb{R}^d$ is created as follows. Let $\Pi$ be a Poisson point process in $\mathbb{R}^d$ with constant intensity $\nu>0$, and $(\Omega,\mathbb{P})$ be the corresponding probability space with expectation $\mathbb{E}$. We now describe a way to obtain a random trap field out of $\Pi$, along with the corresponding trapping rule, which serves as the interaction between the BBM and the Poissonian trap field.


\textbf{Soft obstacles:} Consider a positive, bounded, measurable, and compactly supported \emph{killing function} $W:\mathbb{R}^d\to (0,\infty)$, and for $\omega=\sum_i \delta_{x_i}\in\Omega$ and $x\in\mathbb{R}^d$, define the potential
\begin{equation} 
V(x,\omega)=\sum_i W(x-x_i). \label{eqpotential}
\end{equation}
In this case, the Poissonian trap field $K=K(\omega)$ in $\mathbb{R}^d$ is formed as follows: 
\begin{equation} \label{eqtraprule}
 x\in K(\omega) \:\:\Leftrightarrow\:\:   V(x,\omega)>0  . 
\end{equation}
The \emph{soft killing} rule is that particles branch at the normal rate $\beta$ when outside $K$, whereas inside $K$ they are killed at rate $V=V(x,\omega)$ and their branching is completely suppressed. Note that the special case of constant killing rate inside spherical traps defined in \eqref{eqtrapfield} below corresponds to taking $W=\alpha\mathbbm{1}_{\bar{B}(0,a)}$ with $\alpha>0$ except that $W$ is not summed on overlapping balls. A formal treatment of BBM killed at rate $V=V(x,\omega)$ in $\mathbb{R}^d$ is given in \cite{LV2012}.

For $\omega\in\Omega$ we refer to $\mathbb{R}^d$ with $K(\omega)$ attached simply as the random environment $\omega$, and use $P^\omega_x$ to denote the conditional law of the BBM in the random environment $\omega$. For simplicity, set $P^\omega=P^\omega_0$. Observe that under the law $P^\omega$ the BBM has a spatially dependent branching rate
$$  \beta(x,\omega):=\beta\,\mathbbm{1}_{K^c(\omega)}(x).  $$
The main objective of this paper is to prove a quenched law of large numbers (LLN) for the mass of BBM among the Poissonian trap field introduced above. 

We now briefly describe the mild obstacle problem for BBM, which was studied in \cite{E2008} and \cite{O2021}, and serves as motivation to study the current problem. Let the trap field be given by the random set
\begin{equation}
K=K(\omega):=\bigcup_{x_i\in\,\text{supp}(\Pi)}\bar{B}(x_i,a), \label{eqtrapfield}
\end{equation}
where $\bar{B}(x,a)$ denotes the closed ball of radius $a$ centered at $x\in\mathbb{R}^d$. The \emph{mild obstacle} rule is as follows: when a particle of BBM is outside $K$, it branches at rate $\beta_2>0$, whereas when inside $K$, it branches at a lower rate $\beta_1$ with $0\leq\beta_1<\beta_2$. That is, under the law $P^\omega$, the BBM has a spatially dependent branching rate
$$  \beta(x,\omega):=\beta_2\,\mathbbm{1}_{K^c(\omega)}(x)+\beta_1\,\mathbbm{1}_{K(\omega)}(x).$$
We note that $\beta_1$ was taken to be strictly positive in \cite{E2008}, whereas in \cite{O2021} the case of $\beta_1=0$ was allowed. There is no killing of particles in the mild obstacle model. 

Unlike the mild obstacle setting, under soft killing one can show that on a set of full $\mathbb{P}$-measure there is positive probability for the entire process to be killed in finite time (see Proposition~\ref{prop1}). Therefore, to obtain meaningful results, the process is conditioned on the event of ultimate survival. Recall that $N_t=|Z_t|$ denotes the mass of BBM at time $t$. Let 
\begin{equation} \label{survival}
S_t=\{N_t\geq 1\},   \quad \quad S = \bigcap_{t\geq 0} S_t
\end{equation}
be, respectively, the event of survival up to time $t$, and the event of ultimate survival. We may also write $S_t=\{\tau>t\}$, where $\tau=\inf\{s\geq 0:N_s=0\}$. By continuity of measure from above, one deduces that $\lim_{t\to\infty} P^\omega(S_t) = P^\omega(S)$. Define the law  $\widehat{P}^\omega$ as 
$$ \widehat{P}^\omega(\:\cdot\:):=P^\omega(\:\cdot\:\mid S). $$  
Compared to the mild obstacle problem, the main extra challenge is to show that even under soft killing, in almost every environment conditional on $S$ exponentially many particles are produced for large times with overwhelming probability. This is carried out in Part 1 of the proof of the lower bound of Theorem~\ref{thm1} by making critical use of Lemma~\ref{lemma1} and Lemma~\ref{lemma2}.

For quick reference, the following list collects the different probabilities that we use in this paper. The corresponding expectations will be denoted by similar fonts. In what follows, \emph{free} refers to the model where there is no trap field in $\mathbb{R}^d$.

(i) $\mathbb{P}$ is the law of a homogeneous Poisson point process,

(ii) $P_x$ are the laws of free BBM started by a single particle at $x\in\mathbb{R}^d$,

(iii) $P_x^\omega$ are the conditional laws of BBM started by a single particle at $x$ in the environment $\omega$,

(iv) $\widehat{P}_x^\omega(\:\cdot\:):=P_x^\omega(\:\cdot\: \mid S)$, where $S$ is the event of ultimate survival of the BBM from killing,

(v) $\mathbf{P}_x$ are the laws of free Brownian motion started at $x\in\mathbb{R}^d$,

(vi) $\mathbf{P}_x^\omega$ are the conditional laws of Brownian motion started at $x$ in the environment $\omega$.

\subsection{History}

The aim of this section is to lay the background literature for the current work and to put it in perspective. The study of spatial branching processes among random obstacles in $\mathbb{R}^d$ has originated from Engl\"ander \cite{E2000}, in which a BBM among hard Poissonian obstacles was investigated with the killing rule of \emph{trapping of the first particle}. That is, the entire process is killed at the first hitting of the BBM to the trap field $K$ as opposed to killing only the particle that hits $K$. Equivalently, the event of survival up to time $t$ is defined as 
\begin{equation}
\{T>t\} \quad \text{where} \quad T=\inf\{s\geq 0: Z_t(K)\geq 1 \} .  \label{firstparticle} 
\end{equation}
In \cite{E2000}, Engl\"ander considered a uniform field of traps, and obtained the large-time asymptotic behavior of the annealed probability of survival in $d\geq 2$. Then, Engl\"ander and den Hollander studied in \cite{EH2003} the more interesting case where the trap intensity was radially decaying as
\begin{equation} \label{radialdecay}
\frac{\text{d}\nu}{\text{d}x} \sim \frac{\ell}{|x|^{d-1}}, \quad |x|\rightarrow\infty, \quad \ell>0 , 
\end{equation}
where $\text{d}\nu/\text{d}x$ denotes the density of the mean measure of the PPP with respect to the Lebesgue measure. It was shown in \cite{EH2003} that the decay rate in \eqref{radialdecay} is interesting, because it gives rise to a phase transition at a critical intensity $\ell=\ell_{cr}>0$, at which the behavior of the system changes both in terms of the large-time asymptotics of the annealed survival probability and in terms of the optimal survival strategy. In both \cite{E2000} and \cite{EH2003}, the branching rule was taken as strictly dyadic. Then, in \cite{OCE2017}, the asymptotic results for the survival probability of the system studied in \cite{EH2003} were extended to the case of a BBM with a generic branching law, including the case $p_0>0$, where $p_0$ is the probability that a particle gives no offspring at the end of its lifetime, so that a second mechanism of extinction for the BBM is intrinsically present other than that of the traps. Recently in \cite{OE2019}, conditioning the BBM on the event of survival from hard Poissonian obstacles, \"Oz and Engl\"ander proved several optimal survival strategies in the annealed setting, with particular emphasis on the population size. All works mentioned thus far assumed the hard killing rule in \eqref{firstparticle}.  

In \cite{E2008}, Engl\"ander proposed the mild obstacle problem for BBM, that is, there is no killing of particles but the branching is decreased to a nonzero constant inside $K$, and showed that on a set of full measure with respect to the PPP a kind of LLN holds for the mass of the process. This quenched result was recently improved in \cite{O2021} to the strong law of large numbers, allowing the possibility of no branching inside $K$. The current work is mainly motivated by \cite{E2008} and \cite{O2021}, and aims at proving an LLN for the mass of BBM under soft killing in $\mathbb{R}^d$. We also note that a related problem where a critical BBM that is killed at a small rate $\varepsilon>0$ inside soft obstacles was studied in \cite{LV2012}, where the main problem was to find the asymptotics of the probability that the BBM ever goes outside the ball of radius $R$ centered at the origin if $R$ is large. 

We refer the reader to \cite{E2007} for a survey, and to \cite{E2014} for a detailed treatment on the topic of BBM among random obstacles. Also, we note that the problem of LLN for spatial branching processes in a free environment in $\mathbb{R}^d$, that is, without obstacles, dates back to \cite{W1967}, where an almost sure result on the asymptotic behavior of certain branching Markov processes was established, covering the SLLN for local mass of BBM in fixed Borel sets in $\mathbb{R}^d$ as a special case. For more on the LLN in a free environment, one can see \cite{B1992} and \cite{EHK2010}, where the former work proves SLLN for spatial branching processes in linearly moving Borel sets in both the discrete setting of branching random walk in discrete time and the continuous setting of BBM, and the latter studies the local growth of mass for a large class of branching diffusions. Also, Chapters $2-4$ of \cite{E2014} contains a thorough exposition about the SLLN of branching diffusions in various settings.


\subsection{Outline} 

The rest of the paper is organized as follows. In Section~\ref{section2}, we present our main result. Section~\ref{section3} contains several preparatory results for the proof of Theorem~\ref{thm1}. In Section~\ref{section4}, we construct the almost sure (a.s.) environment that will be used in the soft obstacle problem. In Section~\ref{section5}, we state and prove a key lemma that will serve as a first step for the proof of our main result. In Section~\ref{section6}, we present the proof of Theorem~\ref{thm1}. Section~\ref{section8} discusses some further related problems.

\section{Main Result} \label{section2}

In this section, we present our main result. To this end, we introduce further notation, and define two relevant constants. Let $\omega_d$ denote the volume of the $d$-dimensional unit ball, and $\lambda_{d,r}$ denote the principal Dirichlet eigenvalue of $-\frac{1}{2}\Delta$ on $B(0,r)$ in $d$ dimensions. Set $\lambda_d=\lambda_{d,1}$. Recall that $\nu>0$ is the constant intensity of the PPP, and define the positive constants
\begin{equation} \label{eqro}
R_0=R_0(d,\nu):=\left(\frac{d}{\nu \omega_d}\right)^{1/d}
\end{equation}
and
\begin{equation} \nonumber
c(d,\nu):=\lambda_d \left(\frac{d}{\nu \omega_d}\right)^{-2/d}.
\end{equation}   
With these definitions, observe that $R_0=R_0(d,\nu)=\sqrt{\lambda_d/c(d,\nu)}$. Also, recall the law $\widehat{P}^\omega(\:\cdot\:)=P^\omega(\:\cdot\:\mid S)$ with $S$ as in \eqref{survival}.

\begin{theorem}[Quenched LLN for BBM among soft Poissonian obstacles, $d\geq 2$]\label{thm1}
Let the random environment in $\mathbb{R}^d$ be given by \eqref{eqpotential} and \eqref{eqtraprule}. Then, under the soft killing rule, in $d\geq 2$, on a set of full $\mathbb{P}$-measure,
\begin{equation} \label{eqthm1}
\underset{t\rightarrow\infty}{\lim} (\log t)^{2/d}\left(\frac{\log N_t}{t}-\beta\right)=-c(d,\nu) \quad \text{in}\:\: \widehat{P}^\omega\text{-probability}  .
\end{equation}
\end{theorem}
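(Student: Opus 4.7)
The plan is to prove two matching bounds under $\widehat{P}^\omega$ that together yield convergence in probability: an upper bound $\limsup(\log t)^{2/d}(\log N_t/t-\beta)\leq -c(d,\nu)$, equivalent to an upper bound on $N_t$, and a lower bound $\liminf(\log t)^{2/d}(\log N_t/t-\beta)\geq -c(d,\nu)$, equivalent to a lower bound on $N_t$, both on a set of full $\mathbb{P}$-measure.

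For the upper bound, I would use a first-moment argument. The Feynman-Kac formula for BBM with spatially dependent branching rate $\beta\mathbbm{1}_{K^c}$ and killing rate $V$ gives
\[
E^\omega_0[N_t] = \mathbf{E}_0\!\left[\exp\!\left(\int_0^t\!\left(\beta\mathbbm{1}_{K^c}(B_s)-V(B_s,\omega)\right)ds\right)\right] \leq e^{\beta t}\,\mathbf{E}_0\!\left[\exp\!\left(-\int_0^t V(B_s,\omega)\,ds\right)\right].
\]
The classical Sznitman-type quenched asymptotics for the Brownian exponential functional of a soft Poissonian potential,
\[
\lim_{t\to\infty}\frac{(\log t)^{2/d}}{t}\,\log \mathbf{E}_0\!\left[\exp\!\left(-\int_0^t V(B_s,\omega)\,ds\right)\right] = -c(d,\nu)\quad \mathbb{P}\text{-a.s.},
\]
combined with Markov's inequality, then imply that for every $\varepsilon>0$,
\[
P^\omega_0\!\left(N_t\geq e^{\beta t - (c(d,\nu)-\varepsilon)\,t/(\log t)^{2/d}}\right)\to 0
\]
on a set of full $\mathbb{P}$-measure. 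Since $P^\omega_0(S)>0$ by Proposition~\ref{prop1}, dividing by this positive probability promotes the estimate to the upper bound under $\widehat{P}^\omega$.

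For the lower bound, I would use a clearing argument. The almost-sure environment constructed in Section~\ref{section4} provides, for each large $t$, a trap-free ball $B(y_t,R_t)$ with $R_t=R_0(\log t)^{1/d}(1+o(1))$ and $|y_t|$ negligible compared to $t/(\log t)^{2/d}$. Inside such a ball the particles undergo free BBM with effective exponential rate $\beta-\lambda_{d,R_t}=\beta - c(d,\nu)(\log t)^{-2/d}(1+o(1))$. The argument proceeds in three steps. \emph{Step 1 (transient growth):} Conditional on $S$, show that by time $\varepsilon t$ the BBM has produced at least $e^{(\beta-\eta)\varepsilon t}$ particles with $\widehat{P}^\omega$-probability tending to one; this is precisely where Lemmas~\ref{lemma1} and~\ref{lemma2} enter. \emph{Step 2 (reaching the clearing):} Among the many particles present at time $\varepsilon t$, bound below the probability that at least one travels to $B(y_t,R_t)$ by time $2\varepsilon t$ without being killed in transit, using a Brownian large-deviation estimate together with the uniform boundedness of $V$. \emph{Step 3 (growth in the clearing):} A Dirichlet eigenvalue estimate shows that such a particle initiates a sub-BBM confined to $B(y_t,R_t)$ which, with positive probability, produces at least $\exp((\beta-\lambda_{d,R_t})(t-2\varepsilon t))$ descendants by time $t$. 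Chaining these estimates and letting $\varepsilon,\eta\downarrow 0$ yields the matching lower bound.

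The main obstacle will be Step 1. In the mild obstacle setting of \cite{E2008,O2021} there is no killing and so survival is automatic, whereas here the soft-killing rule allows the entire process to be exterminated in finite time, and one must quantitatively show that conditional on non-extinction the process nevertheless reproduces at nearly the free rate $\beta$ throughout the transient phase, before any clearing can be exploited. Once Lemmas~\ref{lemma1}-\ref{lemma2} deliver this quantitative input, Steps 2 and 3 should follow the now-standard clearing template with only minor modifications to accommodate transit through the bounded potential $V$.
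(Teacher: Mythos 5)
Your upper bound is essentially the paper's: a first-moment/Feynman--Kac computation combined with Sznitman's quenched asymptotics and Markov's inequality, then dividing by $P^\omega(S)>0$. That part is fine (the paper folds the $\beta\mathbbm{1}_K$ term into an enlarged killing function $\widetilde W$ and cites \cite[Theorem 4.5.1]{S1998} to get an exact expected-mass formula, but your looser one-sided estimate gives the same upper bound).

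The lower bound has a genuine gap in the time scales, and it is not cosmetic. You split $[0,t]$ as $[0,\varepsilon t]$, $[\varepsilon t, 2\varepsilon t]$, $[2\varepsilon t, t]$ with $\varepsilon$ a fixed small constant and propose to send $\varepsilon\downarrow 0$ afterwards. Chaining your three steps gives at best
\[
N_t \;\geq\; \exp\Bigl[\bigl(\beta - \lambda_{d,R_t}\bigr)(1-2\varepsilon)t\Bigr]
     \;=\;\exp\Bigl[\beta t - 2\varepsilon\beta t - (1-2\varepsilon)\,c(d,\nu)\,\tfrac{t}{(\log t)^{2/d}}(1+o(1))\Bigr],
\]
so that
\[
(\log t)^{2/d}\Bigl(\tfrac{\log N_t}{t}-\beta\Bigr)\;\geq\;-2\varepsilon\beta(\log t)^{2/d}-(1-2\varepsilon)c(d,\nu)(1+o(1))\;\longrightarrow\;-\infty
\]
for \emph{every} fixed $\varepsilon>0$. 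The term $2\varepsilon\beta t$ you pay for spending a linear amount of time before entering the large clearing is of strictly larger order than the entire subleading correction $c(d,\nu)t/(\log t)^{2/d}$ that the theorem resolves, so the $\liminf$ is $-\infty$ regardless of how small $\varepsilon$ is; the subsequent ``let $\varepsilon\downarrow 0$'' does not repair this because the two limits do not commute. The transient phase must instead be of length $o\bigl(t(\log t)^{-2/d}\bigr)$. This is exactly what the paper does: it splits $[0,t]$ into $[0,m(t)]$, $[m(t),m(t)+\ell(t)]$, $[m(t)+\ell(t),t]$ with $m(t)=o(t(\log t)^{-2/d})$, $\ell(t)\log\ell(t)=o(m(t))$, $m(t)=o(\ell^2(t))$, and $\log\ell(t)\sim\log t$. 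The constraint $\ell(t)\log\ell(t)=o(m(t))$ is precisely what lets the $e^{\delta m(t)}$ particles built up by time $m(t)$ overcome the per-particle transit cost $\exp[-O(\ell(t)\log\ell(t))]$ (tubular estimate plus survival from the potential) over $[m(t),m(t)+\ell(t)]$, and $m(t)+\ell(t)=o(t(\log t)^{-2/d})$ ensures that $\beta(m(t)+\ell(t))$ is absorbed into the $o(1)$ of the correction term.

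Two smaller points in the same step. First, $V$ is \emph{not} uniformly bounded $\mathbb{P}$-a.s.: what one actually has from \cite[Lemma 4.5.2]{S1998} is $\sup_{[-\ell,\ell]^d} V(\cdot,\omega)=o(\log\ell)$ as $\ell\to\infty$; this (not boundedness of $V$) is the input for the transit survival estimate, and it is also why the window length must not be too long. Second, in Step 3 ``with positive probability'' is not enough to conclude the bound in $\widehat P^\omega$-probability; one needs this with probability tending to $1$, which is what Theorem~C (the lower large-deviation estimate for the mass of BBM in a subdiffusively expanding ball) delivers. Your architecture — grow, transit, grow in the clearing, and Lemmas~\ref{lemma1}--\ref{lemma2} to bypass the extinction issue — is the right one and matches the paper's; the fix is to replace $\varepsilon t$, $2\varepsilon t$ by sub-linear scales $m(t)$, $m(t)+\ell(t)$ satisfying the constraints above.
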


\begin{remark}[Quenched LLN]
Theorem~\ref{thm1} is called  a law of large numbers, because it says that the mass of BBM among Poissonian obstacles grows as its expectation (see Proposition~\ref{prop2}) as $t\rightarrow\infty$ in the sense of convergence in probability. The reason why it is called quenched is that it holds on a set of full $\mathbb{P}$-measure, that is, in almost every environment. 
\end{remark}

\begin{remark}[Robustness]
Observe that the result \eqref{eqthm1} does not depend on the details of the killing potential $V$, such as $\sup_{x}W(x)=\sup_{x}V(x,\omega)$ or $\sup_{x\in K_0}\{|x|:W(x)>0\}$, where $K_0$ stands for the compact on which $W$ is supported. Moreover, in \cite[Theorem 1]{O2021}, the same formula as in \eqref{eqthm1} (except that there was no conditioning on ultimate survival) was obtained as the SLLN for BBM among mild obstacles, where the branching was totally or partially suppressed inside the traps but there was no killing of particles. This means, the result is not only unaffected by the fine details of the trapping mechanism, but is also unaffected by the nature of the traps, whether they be soft or mild. Therefore, Theorem~\ref{thm1} above and \cite[Theorem 1]{O2021} suggest that the LLN for the mass of BBM among Poissonian obstacles is quite robust to the nature and details of the trapping mechanism. 

This can be explained as follows. In almost every environment, the mass of BBM to the leading order is entirely determined by what happens inside large trap-free regions rather than what happens inside the traps. In more detail, `large' clearings (see Definition~\ref{def1}) are present in almost every environment in the case of mild traps but also even under soft killing via a potential of the form in \eqref{eqpotential} (to make a connection between the two models, set $a=\sup_{x\in K_0}\{|x|:W(x)>0\}$, where $a$ is the trap radius in \eqref{eqtrapfield} in the case of mild traps), and the BBM is able to hit these clearings soon enough with overwhelming probability regardless of the details of the trapping mechanism. Once the BBM hits such a large clearing, the sub-BBM emanating from the particle that hits the clearing is able to produce sufficiently many particles within this clearing in the remaining time. It is the growth inside the large clearing that determines the mass, to the leading order, of the BBM for large times. Obviously, the sub-BBM evolving inside the large clearing does not feel the effect of the traps. This is why the result is insensitive to the nature and the parameters of the trapping mechanism. The details of this discussion are presented in the proof of the lower bound of Theorem~\ref{thm1}. 
\end{remark}

\section{Preparations}\label{section3}

In this section, we collect some preparatory results that will later be used in the proof of Theorem~\ref{thm1}.

Let us introduce some further notation that will be used throughout the paper. We use $\mathbb{N}$ as the set of positive integers and $\mathbb{R}_+$ as the set of positive real numbers. For $x\in\mathbb{R}^d$, we denote by $|x|$ the Euclidean distance of $x$ to the origin. For a set $A\subseteq\mathbb{R}^d$ and $x\in\mathbb{R}^d$, we define their sum in the sense of sum of sets as $x+A:=\{x+y:y\in A\}$. For a set $A\subseteq\mathbb{R}^d$, we denote by $\partial A$ its boundary in $\mathbb{R}^d$. For two functions $f,g:\mathbb{R}_+\to\mathbb{R}_+$, we write $g(t)=o(f(t))$ and $f(t)\sim g(t)$ if $g(t)/f(t)\rightarrow 0$ and $g(t)/f(t)\rightarrow c$ for some positive constant $c>0$ as $t\rightarrow\infty$, respectively. For an event $A$, we use $A^c$ to denote its complement, and $\mathbbm{1}_A$ its indicator function. We will use $c$, $c_1$, $c_2$, etc. to denote generic constants, whose values may change from line to line. The notation $c(p)$ or $c_p$ will be used to mean that the constant $c$ depends on the parameter $p$.

\subsection{Tubular estimate}

Let $X=(X_t)_{t\geq 0}$ denote a standard Brownian motion in $d$ dimensions, and $(\mathbf{P}_x:x\in\mathbb{R}^d)$ be the laws of Brownian motion started at $x$ with corresponding expectations $(\mathbf{E}_x:x\in\mathbb{R}^d)$. We now state a previous result, which is taken from \cite{S1993}, and will be used in the proof of Lemma~\ref{lemma2}. It concerns a Brownian motion in a free environment, and gives a lower bound on the probability that a Brownian motion stays within a fixed distance from the central axis of a `tube' connecting its starting point to a given point. For $x,y\in\mathbb{R}^d$ and $t>0$, consider the line segment $\{x+(y-x)s/t:0\leq s\leq t\}$. One may refer to the set $\{z\in\mathbb{R}^d:\inf_{0\leq s\leq t}|z-(x+(y-x)s/t)|<a\}$ as a tube (or cylinder) of radius $a$ connecting $x$ and $y$ in $\mathbb{R}^d$.

\begin{propa}[Tubular estimate for Brownian motion; \cite{S1993}]
Let $x,y\in\mathbb{R}^d$ be fixed. There exists a constant $c_d>0$ that depends only on dimension $d$ such that for all $t>0$ and $b>0$,
\begin{equation} 
\mathbf{P}_x\left(\sup_{0\leq s\leq t}\bigg|X_s-\left(x+\frac{s}{t}(y-x)\right)\bigg|<b \right) \geq c_d\exp\left[-\frac{\lambda_d t}{b^2}-\frac{|y-x|^2}{2t} \right]. \nonumber
\end{equation}
\end{propa}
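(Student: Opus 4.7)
The plan is to remove the drift implicit in the ``tube'' via a Cameron--Martin/Girsanov change of measure, apply a symmetrization to dispose of the linear term appearing in the Radon--Nikodym derivative, and conclude with a classical Dirichlet eigenvalue estimate for a Brownian motion confined to a ball of radius $b$. By translation invariance I can and do reduce to $x=0$. Set $v := (y-x)/t$ and $\widetilde{X}_s := X_s - x - sv$, so that the event in question becomes $A := \{\sup_{0 \leq s \leq t}|\widetilde{X}_s| < b\}$. Under $\mathbf{P}_x$ the process $\widetilde{X}$ is a Brownian motion with constant drift $-v$ started at the origin. Introducing the measure $\mathbf{Q}$ under which $\widetilde{X}$ becomes a standard drift-free Brownian motion started at $0$, Girsanov's theorem yields
\[ \frac{d\mathbf{P}_x}{d\mathbf{Q}}\Big|_{\mathcal{F}_t} = \exp\!\left(-v\cdot \widetilde{X}_t - \tfrac{|y-x|^2}{2t}\right), \]
so that
\[ \mathbf{P}_x(A) = e^{-|y-x|^2/(2t)}\,\mathbf{E}^{\mathbf{Q}}\!\left[e^{-v\cdot \widetilde{X}_t}\,\mathbbm{1}_A\right]. \]
This already accounts for the second factor in the claimed bound.

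Next, the set $A$ is invariant under the involution $\widetilde{X}\mapsto -\widetilde{X}$, which preserves $\mathbf{Q}$. Symmetrizing in the above expectation, one has
\[ \mathbf{E}^{\mathbf{Q}}\!\left[e^{-v\cdot \widetilde{X}_t}\,\mathbbm{1}_A\right] = \mathbf{E}^{\mathbf{Q}}\!\left[\cosh(v\cdot \widetilde{X}_t)\,\mathbbm{1}_A\right] \geq \mathbf{Q}(A), \]
the last inequality using $\cosh \geq 1$. It therefore suffices to produce a lower bound
\[ \mathbf{Q}(A) = \mathbf{P}_0\!\left(\sup_{0\leq s\leq t}|X_s|<b\right) \geq c_d\, e^{-\lambda_d t/b^2} \]
for some $c_d>0$ depending only on the dimension.

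This final step is the standard Dirichlet eigenvalue estimate for a Brownian motion in a ball: by Brownian scaling the probability equals $\mathbf{P}_0(\sup_{u\leq t/b^2}|X_u|<1)$, and the spectral expansion of the Dirichlet heat kernel on $B(0,1)$, evaluated at the origin, delivers a lower bound of the form $c_d e^{-\lambda_d T}$ uniformly in $T\geq 0$ (for small $T$ the probability is close to one, while for large $T$ one uses positivity of the principal eigenfunction at the origin together with its $L^2$-normalization). Combining the three displays proves the claim. The main obstacle, modest as it is, is verifying that the constant $c_d$ in this ball-exit bound can indeed be chosen uniformly in $T$ and depending only on dimension; this is classical but relies on a careful spectral argument, which is one reason the reference \cite{S1993} is convenient to invoke here.
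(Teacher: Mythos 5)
Your argument is correct. The paper itself does not prove Proposition A (it is imported from Sznitman's work), so there is no in-paper proof to compare against, but your route — Cameron--Martin/Girsanov to remove the drift, the $\widetilde{X}\mapsto-\widetilde{X}$ symmetrization to replace $e^{-v\cdot\widetilde{X}_t}$ by $\cosh(v\cdot\widetilde{X}_t)\geq 1$, and Brownian scaling plus the Dirichlet confinement estimate for the unit ball — is the standard proof of such tubular estimates and everything checks: the Radon--Nikodym derivative $\exp(-v\cdot\widetilde{X}_t-\tfrac12|v|^2t)$ produces exactly $\exp(-|y-x|^2/(2t))$ as a prefactor, the symmetrization step is valid since $\mathbf{Q}$ and $A$ are both reflection-invariant, and scaling reduces the task to $\mathbf{P}_0(\sup_{u\le T}|X_u|<1)\ge c_d e^{-\lambda_d T}$ with $T=t/b^2$.

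One small simplification for the last step, which you flag as the only delicate point: you do not need the full spectral expansion, the small-/large-$T$ dichotomy, or the $L^2$-normalization. Let $\phi_1>0$ be the principal Dirichlet eigenfunction of $-\tfrac12\Delta$ on $B(0,1)$, normalized so that $\phi_1(0)=\sup\phi_1=1$ (it is radial and decreasing, so the maximum is at the origin). Then $M_s:=e^{\lambda_d(s\wedge\sigma)}\phi_1(X_{s\wedge\sigma})$, with $\sigma$ the exit time of $B(0,1)$, is a bounded martingale on $[0,T]$, and optional stopping together with $\phi_1|_{\partial B}=0$ gives
\[
1=\phi_1(0)=\mathbf{E}_0\bigl[e^{\lambda_d T}\phi_1(X_T)\mathbbm{1}_{\{\sigma>T\}}\bigr]\le e^{\lambda_d T}\,\mathbf{P}_0(\sigma>T),
\]
so in fact $\mathbf{P}_0(\sigma_{B(0,1)}>T)\ge e^{-\lambda_d T}$ for all $T\ge 0$ and you may take $c_d=1$. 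This removes the only part of your write-up that relied on a ``careful'' argument and makes the whole proof self-contained.
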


\subsection{Survival probability}

We first give a definition concerning special random subsets of $\mathbb{R}^d$ in the environment $\omega$, followed by a previous result, which gives an a.s.-environment in the soft obstacle setting. Then, we prove a preliminary result on the survival probability of the BBM among soft obstacles.
 
\begin{definition} \label{def1}
We call $A\subseteq\mathbb{R}^d$ a \emph{clearing} in the random environment $\omega$ if $A\subseteq K^c$. By a \emph{clearing of radius $r$}, we mean a ball of radius $r$ which is a clearing. 
\end{definition}

\begin{propb}[Large almost-sure clearings, soft obstacles; \cite{S1998}]
Let the random environment in $\mathbb{R}^d$ be given by \eqref{eqpotential} and \eqref{eqtraprule}. Then, on a set of full $\mathbb{P}$-measure, there exists $\ell_0>0$ such that for all $\ell>\ell_0$ the cube $[-\ell,\ell]^d$ contains a clearing of radius 
\begin{equation} \label{eq0}
R_\ell:=R_0(\log \ell)^{1/d}-(\log \log \ell)^2 ,\:\:\ell>1. 
\end{equation}
\end{propb}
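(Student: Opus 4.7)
The plan is to use a tiling argument combined with the Borel--Cantelli lemma, exploiting the independence coming from the Poisson structure. Let $a := \sup\{|y| : W(y) > 0\}$ denote the range of the compactly supported killing function $W$. The key geometric observation is that a ball $B(x,R)$ is a clearing precisely when no atom of $\Pi$ lies in the enlarged ball $B(x,R+a)$, because $V(\cdot,\omega)$ vanishes on $B(x,R)$ iff $W(y-x_i)=0$ for every $y\in B(x,R)$ and every atom $x_i$. I would therefore reduce the entire statement to a question about absence of Poisson points in suitable balls.

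Set $\ell_n := 2^n$ and tile $Q_n := [-\ell_n,\ell_n]^d$ by $M_n \sim c\,\ell_n^d/\log\ell_n$ pairwise disjoint axis-aligned sub-cubes of side $2(R_{\ell_{n+1}}+a)$. Letting $y_j^{(n)}$ denote the center of the $j$-th sub-cube, define
\begin{equation*}
A_j^{(n)} := \bigl\{\Pi \cap B(y_j^{(n)},R_{\ell_{n+1}}+a) = \emptyset\bigr\}.
\end{equation*}
On $A_j^{(n)}$ the inscribed ball $B(y_j^{(n)},R_{\ell_{n+1}})$ is a clearing contained in $Q_n$. Since the enlarged balls lie in disjoint sub-cubes, the events $(A_j^{(n)})_{j=1}^{M_n}$ are independent with common probability $p_n := \exp(-\nu\omega_d(R_{\ell_{n+1}}+a)^d)$. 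Expanding $(R_{\ell_{n+1}}+a)^d$ via the binomial theorem and using the identity $\nu\omega_d R_0^d = d$, one obtains, for some $\kappa_d>0$,
\begin{equation*}
p_n = \ell_{n+1}^{-d}\exp\bigl(\kappa_d(\log\ell_{n+1})^{(d-1)/d}(\log\log\ell_{n+1})^2(1+o(1))\bigr),
\end{equation*}
so $M_n p_n$ diverges faster than any polynomial in $n$. By independence,
\begin{equation*}
\mathbb{P}\!\left(\bigcap_{j=1}^{M_n}(A_j^{(n)})^c\right) \leq (1-p_n)^{M_n} \leq e^{-M_n p_n},
\end{equation*}
which is summable in $n$; the Borel--Cantelli lemma then yields that, on a set of full $\mathbb{P}$-measure, $Q_n$ contains a clearing of radius $R_{\ell_{n+1}}$ for all sufficiently large $n$.

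To pass from the dyadic subsequence to general $\ell$, observe that for $\ell\in[\ell_n,\ell_{n+1}]$ one has $Q_\ell \supseteq Q_n$ and $R_\ell \leq R_{\ell_{n+1}}$ (since $\ell\mapsto R_\ell$ is eventually increasing for $d\geq 2$). Hence $Q_\ell$ contains a clearing of radius at least $R_\ell$ once $n$ is large, which provides the desired $\omega$-dependent threshold $\ell_0$. The main technical point will be the precision required in the binomial expansion of $R_{\ell_{n+1}}^d$: the $(\log\log\ell)^2$ sub-leading term in the definition of $R_\ell$ is exactly what turns the borderline Poisson emptiness probability $\sim\ell^{-d}$ into a quantity small enough that $M_n p_n$ still diverges rapidly. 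A weaker correction would leave too little slack to close the Borel--Cantelli estimate; the remaining ingredients are standard second-moment/independence estimates for Poisson random sets.
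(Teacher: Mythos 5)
Your proof is correct and uses the standard tiling-plus-Borel--Cantelli argument. The paper does not actually give a proof of Proposition~B — it is cited from \cite{S1998} — but your method mirrors the technique the paper uses for the closely related Lemma~\ref{lemma1}, with the added care of enlarging the Poisson-empty balls by the range $a$ of $W$ so that the events $A_j^{(n)}$ are both genuinely independent (determined by disjoint regions) and actually sufficient for the inscribed concentric ball of radius $R_{\ell_{n+1}}$ to be a clearing; this is the right way to handle the fact that a clearing is a trap-free set, not a Poisson-point-free set. One small inaccuracy in your closing remark: the claim that a correction weaker than $(\log\log\ell)^2$ would leave too little slack is only accurate in $d=1$, where the prefactor $(\log\ell)^{(d-1)/d}$ degenerates to a constant. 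For $d\geq 2$, even a $-\log\log\ell$ correction would already make $M_n p_n$ super-polynomial in $n$, since $(\log\ell_{n+1})^{(d-1)/d}\to\infty$ on its own. This does not affect the validity of the argument, only the sharpness claim.
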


\begin{proposition}[Survival probability for BBM among soft obstacles]\label{prop1}
Let the random environment in $\mathbb{R}^d$ be given by \eqref{eqpotential} and \eqref{eqtraprule}. Then, under the soft killing rule, on a set of full $\mathbb{P}$-measure, 
$$ 0<P^\omega(S)<1  . $$
\end{proposition}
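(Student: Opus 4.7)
My plan is to prove the two bounds separately on a set of full $\mathbb{P}$-measure, in each direction constructing a single scenario of positive probability and showing it forces the desired event.

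\textbf{Positive extinction probability ($P^\omega(S)<1$).} On a set of full $\mathbb{P}$-measure, $\Pi$ has at least one atom; pick one, $x_0=x_0(\omega)$. Since $W$ is strictly positive on its (nonempty compact) support, there exist $\varepsilon,\alpha>0$ (depending on $\omega$) with $V(\,\cdot\,,\omega)\geq\alpha$ on $B(x_0,\varepsilon)$. Consider the event on which the initial particle (i) travels from $0$ to $x_0$ in time $T_1$ along a narrow tube without branching and without being killed, and then (ii) remains in $B(x_0,\varepsilon)$ during $[T_1,T_1+T_2]$, without branching, and is killed at some time in this interval. On this event the BBM is extinct by time $T_1+T_2$, since no second particle is ever created. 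The spatial part of (i) has positive BM probability by Proposition A and the spatial part of (ii) by a standard small-ball estimate; the no-branching factor is at least $e^{-\beta(T_1+T_2)}$; the no-killing factor on (i) is positive because $V$ is finite a.s.\ along any bounded path (only finitely many Poisson atoms contribute); and the killing during (ii) contributes at least $1-e^{-\alpha T_2}$. Multiplying yields $P^\omega(\tau<\infty)>0$.

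\textbf{Positive survival probability ($P^\omega(S)>0$).} By Proposition B, a.s.\ there exists a clearing $B(y,R)\subseteq K^c(\omega)$ with $R$ arbitrarily large; fix $R>\sqrt{\lambda_d/\beta}$, so that $\beta>\lambda_{d,R}=\lambda_d/R^2$. As above, with positive probability the initial particle reaches $y$ in some time $T_1$ without branching and without being killed. Once at $y$, every descendant that remains inside $B(y,R)$ sees no traps at all, so the sub-BBM killed upon exit from $B(y,R)$ is a strictly dyadic BBM of branching rate $\beta$ in a ball with Dirichlet boundary. Since $\beta>\lambda_{d,R}$, this process is supercritical in the sense of branching diffusions in a bounded domain, hence survives with positive probability (a classical fact for spatial branching processes; see, e.g., \cite{E2014}). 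Survival of this sub-BBM forces $S$ for the full BBM, and multiplying the positive probabilities gives $P^\omega(S)>0$.

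\textbf{Main obstacle.} The only step that is not pure Feynman--Kac and tubular-estimate bookkeeping is the assertion that the sub-BBM in $B(y,R)$ with Dirichlet killing on $\partial B(y,R)$ survives with positive probability whenever $\beta>\lambda_{d,R}$. This is standard, but in a self-contained treatment one should verify it, for instance via a nonnegative $L^2$-martingale built from the principal Dirichlet eigenfunction on $B(y,R)$, whose positive expectation forces a positive survival probability.
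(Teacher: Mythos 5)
Your overall plan is correct, and your two halves map onto the paper's two bounds; but you take a genuinely different route for the extinction half, and that route, while salvageable, has a regularity gap worth noting.

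\textbf{Extinction bound.} The paper does not construct an explicit death scenario. Instead it invokes Sznitman's asymptotics for the single-particle Brownian survival probability among soft obstacles (\cite[Theorem 2.5]{S1993}), which says that a.s.\ the survival probability of one Brownian particle tends to zero; hence there is $t_0(\omega)$ with single-particle survival $\leq 1/2$ by time $t_0$, and coupling this with the probability $e^{-\beta t_0}$ of no branching gives extinction probability $\geq e^{-\beta t_0}/2$. Your alternative is more elementary — a tube to an obstacle site plus a killing window — which buys self-containedness at the price of a concrete scenario. The weak point is the assertion that ``$V(\cdot,\omega)\geq\alpha$ on $B(x_0,\varepsilon)$''. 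The paper only assumes $W$ is measurable, positive, bounded and compactly supported; it need not be continuous, so the set $\{W\geq\alpha\}$ need not contain a ball for any $\alpha>0$, and hence there may be no ball on which $V$ is uniformly bounded below. The correct (and even simpler) fix is to work directly with the Feynman--Kac integral: conditioned on no branching over $[0,T]$, the extinction probability is
\begin{equation*}
1-\mathbf{E}_0\!\left[\exp\!\left(-\int_0^T V(X_s,\omega)\,ds\right)\right],
\end{equation*}
which is positive because $\mathbf{E}_0\!\left[\int_0^T V(X_s,\omega)\,ds\right]=\int_0^T\!\int V(y,\omega)\,p_s(0,y)\,dy\,ds>0$ as soon as $\{V(\cdot,\omega)>0\}$ has positive Lebesgue measure (a.s.\ the case). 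This dispenses with the tube entirely and with any regularity of $W$.

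\textbf{Survival bound.} Here your argument is essentially the paper's: use Proposition B to locate a clearing of radius $R$ exceeding the critical radius $R_{cr}$ with $\lambda_{d,R_{cr}}=\beta$, steer the initial particle there without branching or killing, and then use supercriticality of BBM with Dirichlet killing in the ball. Two small points where the paper is more careful and you should be too. First, in $d\geq 2$ a Brownian particle a.s.\ does not hit the exact centre $y$; the paper sends the particle into $B(x_0,1)$ and chooses $R>R_{cr}+1$ so that after the particle lands anywhere in the unit ball around the centre, a clearing ball of radius $>R_{cr}$ is still available around the particle. You should build in the same slack. Second, the supercriticality claim (positive probability that some particle never leaves the ball when $\beta>\lambda_{d,R}$) is exactly the cited fact from \cite[Thm.\ 5.5.4]{E2014}, so your ``main obstacle'' is already resolved in the literature the paper uses; your $L^2$-martingale suggestion is a fine self-contained alternative.

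In summary: correct structure, same idea for the survival half (with minor point-vs-ball imprecision), a different and more elementary idea for the extinction half that needs the Feynman--Kac fix above to cover merely measurable $W$.
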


\begin{proof}
Recall the definitions of $S_t$ and $S$ from \eqref{survival}. It is clear that $P^\omega(S_t)$ is nonincreasing in $t$, and bounded below by zero. Hence, $\lim_{t\to\infty} P^\omega(S_t) = P^\omega(S)$ exists. We will show that on a set of full $\mathbb{P}$-measure there exist constants $c_1=c_1(\omega)$ and $c_2=c_2(\omega)$ such that 
\begin{equation}
0<c_1\leq P^\omega(S_t)\leq c_2<1  \label{eqsurvival}
\end{equation}
for all large $t$. To prove the upper bound in \eqref{eqsurvival}, use the single-particle Brownian survival asymptotics among soft obstacles from \cite[Theorem 2.5]{S1993}, which implies that on a set of full $\mathbb{P}$-measure the survival probability goes to zero as $t\to\infty$. This means, there exists $t_0=t_0(\omega)$ such that for all $t\geq t_0$ the probability of survival up to time $t$ is at most $1/2$. This implies, since the branching and motion mechanisms in a BBM are independent, that on a set of full $\mathbb{P}$-measure, $P^\omega(S_t)\leq 1-\exp(-\beta t_0)/2$ for all $t\geq t_0$.  

To prove the lower bound in \eqref{eqsurvival}, define
\begin{equation}
\Omega_s=\{\omega\in\Omega:\exists\:\ell_1=\ell_1(\omega), \:\forall\:\ell\geq \ell_1, \:[-\ell,\ell]^d \:\text{contains a clearing of radius}\: R_\ell\} . \label{eqomegas}
\end{equation}
From Proposition B, we know that $\mathbb{P}(\Omega_s)=1$. On the other hand, by the proof of \cite[Thm.\ 5.5.4, p.193]{E2014}, there exists a critical radius, say $R_{cr}$, which is given by $\lambda_{d,R_{cr}}=\beta$, such that for any $R>R_{cr}$ the probability $p_R$ that at least one particle of BBM has not left $B(0,R)$ ever, is positive. Now let $\omega\in\Omega_s$, and choose $R=R(\omega)$ so that 
$$ R>R_{cr}+1 \quad \text{and} \quad e^{(2R/R_0)^d}>\ell_1(\omega)   , $$
where $\ell_1$ is as introduced in \eqref{eqomegas}. Then, in the environment $\omega$, by definition of $R_\ell$ and $\Omega_s$, the box 
$$ C(\omega,d):=\left[-e^{(2R/R_0)^d},e^{(2R/R_0)^d}\right]^d $$
contains a clearing of radius $R$. Let $B(x_0,R)$ be this clearing where $x_0=x_0(\omega)$ and $B(x_0,R)\subseteq C(\omega,d)$. Consider the following survival strategy for the BBM. Over $[0,1]$, avoid being killed by the trap field and send the initial particle to $B(x_0,1)$. We may (but don't have to) suppress the branching over $[0,1]$ so that the initial particle is still alive at time $1$. Let this joint strategy have probability $p_s$. Observe that $p_s>0$ since the box $C(\omega,d)$ is fixed and the killing function $W$ is bounded. Then, over $[1,\infty)$, we know from the proof of \cite[Thm.\ 5.5.4, p.193]{E2014} that since $R>R_{cr}+1$ at least one particle of the sub-BBM that is initiated at time $1$ from within $B(x_0,1)$ does not ever leave the clearing $B(x_0,R)$ with probability $p_R>0$. Hence, by the Markov property applied at time $1$, for all $t\geq 1$, 
\begin{equation} \nonumber
 P^\omega(S_t)\geq P^\omega(S) \geq p_s p_R>0 .
\end{equation}
This completes the proof of the lower bound in \eqref{eqsurvival}. 
\end{proof}

\subsection{Expected mass}

A first consideration for the mass of BBM among soft obstacles is to calculate its expectation and obtain a formula to the leading order which holds in almost every environment. The expected mass formula below will also be explicitly used in the proof of the upper bound of Theorem~\ref{thm1}.

\begin{proposition}[Expected mass for BBM among soft obstacles] \label{prop2}
On a set of full $\mathbb{P}$-measure,
\begin{equation*} 
E^\omega[N_t]=\exp\left[\beta t-c(d,\nu)\frac{t}{(\log t)^{2/d}}(1+o(1))\right].
\end{equation*}
\end{proposition}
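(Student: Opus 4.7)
The plan is to reduce the first moment of the BBM, via the many-to-one formula, to a Feynman--Kac exponential of a single Brownian motion, and then to sandwich the resulting effective potential between two bona fide Sznitman-type soft Poissonian potentials built over the same PPP, so that the quenched single-particle asymptotics can be invoked from both sides.

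By the many-to-one formula for a BBM with spatially dependent branching rate $\beta(x,\omega)=\beta\,\mathbbm{1}_{K^c(\omega)}(x)$ and killing rate $V(x,\omega)$, the first moment $E^\omega_x[N_t]$ is the Feynman--Kac solution of $\partial_t u=\tfrac12\Delta u+(\beta(\cdot,\omega)-V(\cdot,\omega))u$ with $u(\cdot,0)\equiv 1$. Hence
\begin{equation*}
E^\omega[N_t]=\mathbf{E}_0\!\left[\exp\!\left(\int_0^t\bigl(\beta(X_s,\omega)-V(X_s,\omega)\bigr)\,ds\right)\right].
\end{equation*}
Using $\beta\,\mathbbm{1}_{K^c}=\beta-\beta\,\mathbbm{1}_K$ and defining the effective killing potential $\tilde V(x,\omega):=\beta\,\mathbbm{1}_{K(\omega)}(x)+V(x,\omega)$, this rewrites as
\begin{equation*}
E^\omega[N_t]=e^{\beta t}\,u^\omega(t),\qquad u^\omega(t):=\mathbf{E}_0\!\left[\exp\!\left(-\int_0^t\tilde V(X_s,\omega)\,ds\right)\right],
\end{equation*}
so the proposition is equivalent to the assertion that $\log u^\omega(t)=-c(d,\nu)\,t(\log t)^{-2/d}(1+o(1))$ $\mathbb{P}$-a.s.

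The key observation is that $\tilde V$ is pointwise sandwiched between two Poissonian soft potentials of Sznitman type built over the same PPP. Clearly $\tilde V\geq V$. For the other direction, define $W'(y):=W(y)+\beta\,\mathbbm{1}_{K_0}(y)$, where $K_0:=\text{supp}(W)$; since $W$ is positive, bounded, measurable, and compactly supported, so is $W'$, and the associated potential
\begin{equation*}
V'(x,\omega):=\sum_i W'(x-x_i)=V(x,\omega)+\beta\sum_i\mathbbm{1}_{K_0}(x-x_i)
\end{equation*}
satisfies $V'\geq V+\beta\,\mathbbm{1}_K=\tilde V$, because $\sum_i\mathbbm{1}_{K_0}(x-x_i)\geq\mathbbm{1}_K(x)$. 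Thus $V\leq\tilde V\leq V'$ pointwise, and consequently
\begin{equation*}
\mathbf{E}_0\!\left[e^{-\int_0^t V'(X_s,\omega)\,ds}\right]\leq u^\omega(t)\leq \mathbf{E}_0\!\left[e^{-\int_0^t V(X_s,\omega)\,ds}\right].
\end{equation*}

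Sznitman's quenched large-$t$ asymptotics for a Brownian motion in a Poissonian soft potential \cite[Thm.\ 2.5]{S1993} applies to both $V$ and $V'$, since both are sums of translates of a positive, bounded, compactly supported killing function under the same PPP of intensity $\nu$. Its leading-order constant $c(d,\nu)=\lambda_d/R_0^2$ depends only on $d$ and $\nu$, not on the killing function. Therefore, on a set of full $\mathbb{P}$-measure, the two outer members of the sandwich both equal $\exp\bigl[-c(d,\nu)\,t(\log t)^{-2/d}(1+o(1))\bigr]$, which pins down the same asymptotic for $u^\omega(t)$; substituting into $E^\omega[N_t]=e^{\beta t}u^\omega(t)$ yields the proposition. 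The only non-routine step is the sandwich construction: $\tilde V$ is not itself a sum-of-translates potential (because $\mathbbm{1}_K$ merges overlapping obstacles into a single obstacle), so Sznitman's theorem cannot be applied to $\tilde V$ directly, and one has to pass to the dominating Poissonian potential $V'$ with the same leading-order cost. Beyond this observation, I expect no serious technical obstacle.
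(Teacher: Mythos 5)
Your proof is correct and follows essentially the same route as the paper: reduce $E^\omega[N_t]$ via the first-moment formula to a Feynman--Kac functional of a single Brownian motion with effective potential $\beta\mathbbm{1}_{K}+V$, then invoke Sznitman's quenched single-particle survival asymptotics. The one difference is cosmetic: the paper handles the fact that $\beta\mathbbm{1}_K+V$ is not itself a sum-of-translates Poissonian potential by citing \cite[Remark 4.2.2]{S1998}, while you handle it with the explicit sandwich $V\leq\beta\mathbbm{1}_K+V\leq V'$ between two genuine Poissonian potentials sharing the same leading-order cost $c(d,\nu)$---a cleaner, more self-contained alternative to that citation.
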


\begin{proof}
Consider a general branching mechanism in $K$ such that when inside $K$ particles branch according to the offspring law $(p_k)_{k\geq 0}$ as opposed to binary branching. Let $\mu_1=\sum_{k=1}^\infty k p_k$ be the associated mean number of offspring. Observe that soft killing under the potential $V$ together with complete suppression of branching inside the obstacles is tantamount to the offspring law $(p_k)_{k\geq 0}$ with $p_0=1$ and branching rate $V(x,\omega)$ inside $K$. In this way, both the branching rate and the offspring mean depend on position as
\begin{align}
\beta(x,\omega) &=\beta\,\mathbbm{1}_{K^c(\omega)}(x)+V(x,\omega)\,\mathbbm{1}_{K(\omega)}(x), \label{branchinglaw} \\ 
\mu(x,\omega)   &=2\,\mathbbm{1}_{K^c(\omega)}(x) \label{branchinglaw2}  . 
\end{align} 
Note that $p_0=1$ implies $\mu_1=0$. By the construction in \eqref{eqtraprule}, $V=V\mathbbm{1}_K$. Define $m(x,\omega)=\mu(x,\omega)-1$ and $m_1=\mu_1-1$. Then, $\beta(x,\omega)m(x,\omega)=\beta-(\beta+V)\mathbbm{1}_K$. Applying the classical first moment formula for spatial branching processes $\omega$-wise (see for instance \cite[Lemma 1]{GHK2022} for a more general version), and using \eqref{branchinglaw} and \eqref{branchinglaw2}, we obtain
\begin{align}
E^\omega[N_t] &= \mathbf{E}_0 \left[\exp\left(\int_0^t \beta(X_s,\omega)m(X_s,\omega) ds \right)\right] \nonumber \\
&= e^{\beta t }\mathbf{E}_0 \left[\exp\left(-\int_0^t (\beta\mathbbm{1}_{K(\omega)}(X_s)+V(X_s,\omega))ds\right) \right]. \nonumber
\end{align}
The expectation on the right-hand side is the survival probability up to $t$ of a single Brownian motion among soft obstacles with killing function 
\begin{equation}
\widetilde{W}(x)=\beta\mathbbm{1}_{K_0}(x)+W(x), \label{eqw}
\end{equation}
where $K_0$ denotes the compact set on which $W$ is supported, except that the first term in \eqref{eqw} is not summed on the overlapping compacts. This, nonetheless, does not affect the asymptotic behavior of the survival probability (see \cite[Remark 4.2.2]{S1998}). Note that the function $\widetilde{W}$ is also positive, bounded, measurable, and compactly supported. Hence, the result follows from \cite[Theorem 4.5.1]{S1998}. 
\end{proof}

\subsection{Large-deviations for BBM in an expanding ball}

For a generic standard Brownian motion $X=(X_t)_{t\geq 0}$ and a Borel set $A\subseteq\mathbb{R}^d$, define $\sigma_A=\inf\{s\geq 0:X_s\notin A\}$ to be the first exit time of $X$ out of $A$. We now describe the model of \emph{BBM with deactivation at a boundary}, which was introduced in \cite{O2021}. For a Borel set $A\subseteq\mathbb{R}^d$, denote by $\partial A$ the boundary of $A$. Consider a family of Borel sets $B=(B_t)_{t\geq 0}$. Let $Z^B=(Z_t^{B_t})_{t\geq 0}$ be the BBM deactivated at $\partial B$, which can be obtained from $Z$ as follows: for each $t\geq 0$, start with $Z_t$, and delete from it any particle whose ancestral line up to $t$ has exited $B_t$ to obtain $Z^{B_t}_t$. This means, $Z^{B_t}_t$ consists of particles of $Z_t$ whose ancestral lines up to $t$ have been confined to $B_t$ over the time period $[0,t]$ (but may have left $B_s$ at an earlier time $s$). 

The following result is the first part (the low $\kappa$ regime) of \cite[Theorem 2]{O2021}, and will be used in the proof of the main result. It gives the large-time asymptotic behavior of the probability that the mass of BBM deactivated at the boundary of a subdiffusively expanding ball $B=(B_t)_{t\geq 0}$ is atypically small. 

\begin{thmc}[Lower large-deviations for mass of BBM in an expanding ball; Theorem 2, \cite{O2021}] \label{thma}
Let $r:\mathbb{R}_+ \to \mathbb{R}_+$ be increasing such that $r(t)\to\infty$ as $t\to\infty$ and $r(t)=o(\sqrt{t})$. Also, let $\gamma:\mathbb{R}_+ \to \mathbb{R}_+$ be defined by $\gamma(t)=e^{-\kappa r(t)}$, where $\kappa>0$ is a constant. For $t>0$, set $B_t=B(0,r(t))$, $p_t=\mathbf{P}_0(\sigma_{B_t}\geq t)$, and $n_t=|Z_t^{B_t}|$. Then, for any $0<\kappa\leq \sqrt{\beta/2}$, 
\begin{equation}
\underset{t\rightarrow\infty}{\lim}\,\frac{1}{r(t)}\log P\left(n_t < \gamma_t p_t e^{\beta t}\right)= -\kappa. \nonumber
\end{equation}
\end{thmc}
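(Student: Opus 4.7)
My plan is to establish the equality by matching bounds, $\liminf_{t\to\infty} r(t)^{-1}\log P(n_t<\gamma_t p_t e^{\beta t}) \geq -\kappa$ and $\limsup_{t\to\infty} r(t)^{-1}\log P(n_t<\gamma_t p_t e^{\beta t}) \leq -\kappa$. By the many-to-one formula $E[n_t] = p_t e^{\beta t}$, so the event in question is simply that the mass is atypically small (by an exponential factor $\gamma_t = e^{-\kappa r(t)}$) relative to its mean.

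For the lower bound I would exhibit an explicit strategy. Fix a large constant $C>0$ and set $s_t := (\kappa r(t) + \beta C)/\beta$. Consider the event $A_t$ that the initial particle does not branch on $[0,s_t]$ and that $X_{s_t}\in B_t$. Since the lifetime is $\mathrm{Exp}(\beta)$ and $|X_{s_t}| = O(\sqrt{s_t}) = o(r(t))$ (by $r(t)=o(\sqrt t)$ and standard Gaussian tail bounds), $P(A_t) = (1-o(1))e^{-\beta s_t} = (1-o(1))e^{-\beta C}e^{-\kappa r(t)}$. Conditional on $A_t$ and on $X_{s_t}$, the mass $n_t$ has the distribution of $|Z_{t-s_t}^{B_t}|$ started at $X_{s_t}$, whose expectation by many-to-one is
\[
\mathbf{E}_{X_{s_t}}\!\bigl[e^{\beta(t-s_t)}\mathbf{1}_{\sigma_{B_t}>t-s_t}\bigr] = (1+o(1))\,e^{-\beta s_t}\,p_t\, e^{\beta t} = (1+o(1))\,e^{-\beta C}\,\gamma_t p_t e^{\beta t},
\]
using the principal Dirichlet eigenvalue asymptotics on $B_t = B(0,r(t))$ from a near-origin point over a time $t-s_t\sim t$. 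Markov's inequality yields $P(n_t \geq \gamma_t p_t e^{\beta t}\mid A_t) \leq (1+o(1))e^{-\beta C}$, so unconditionally $P(n_t < \gamma_t p_t e^{\beta t}) \geq (1-2e^{-\beta C})\,e^{-\beta C}\,e^{-\kappa r(t)}$. Taking logarithms, dividing by $r(t)$, and sending $t\to\infty$ then $C\to\infty$ gives $\liminf \geq -\kappa$.

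For the upper bound I would use an exponential-Markov estimate on the Laplace transform $\phi_t(\theta) := E[\exp(-\theta n_t/(p_t e^{\beta t}))]$. For any $\theta>0$, $P(n_t < \gamma_t p_t e^{\beta t}) \leq e^{\theta \gamma_t}\phi_t(\theta) = (1+o(1))\phi_t(\theta)$. Viewing $u(s,x) := E_x[\exp(-\theta n_s^{B_t}/(p_t e^{\beta t}))]$ as the solution of the semilinear PDE $\partial_s u = \tfrac12 \Delta u + \beta(u^2 - u)$ on $[0,t]\times B_t$ with boundary data $u\equiv 1$ on $\partial B_t$, and conditioning on the first branching time $T\sim \mathrm{Exp}(\beta)$, $\phi_t$ satisfies a McKean-type renewal identity. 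Splitting this identity at the threshold $s^{\ast}_t := \kappa r(t)/\beta$, the contribution from $s>s_t^{\ast}$ is immediately bounded by $e^{-\beta s_t^{\ast}} = e^{-\kappa r(t)}$ via the exponential prefactor, while the contribution from $s\leq s_t^{\ast}$ is controlled by iterating the identity (or equivalently by a spine/skeleton decomposition) and using that $u(t-s,\cdot)$ is uniformly bounded away from $1$ inside $B_t$ for $t-s\gg 1$. The restriction $\kappa \leq \sqrt{\beta/2}$ arises here: it is precisely the range in which the single-particle suppress-the-initial-branching mechanism is the optimal route to atypically small mass; above $\sqrt{\beta/2}$ one expects a different mechanism involving many simultaneous near-boundary deactivations to dominate.

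The principal obstacle is the upper bound, specifically matching the rate $-\kappa$ exactly on the moving domain $B_t$. The time-dependence of the boundary (through $r(t)$) precludes a direct eigenfunction expansion for $u$, so the analysis must patch short-time estimates for the branching semigroup with precise asymptotics of the Dirichlet Green's function on $B_t$, and combine these with the small-value tail of the additive martingale limit for fixed-ball BBM. Handling the interaction between these two scales (branching time $s_t^\ast \sim r(t)$ versus ball radius $r(t)$ and remaining time $t-s\sim t$) is the delicate step.
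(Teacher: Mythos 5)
The paper does not prove Theorem~C: it imports the statement verbatim from the author's earlier work \cite{O2021} (Theorem~2 there, ``low $\kappa$'' regime) and uses it as a black box. There is therefore no in-paper proof to compare against, and I can only assess your proposal on its own terms.

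Your lower bound (the $\liminf \geq -\kappa$ direction) is essentially correct and is the natural strategy: suppress branching of the initial particle on $[0,s_t]$ with $s_t = (\kappa r(t)+\beta C)/\beta$, which costs $e^{-\beta s_t} = e^{-\beta C}\gamma_t$ and entails no extra cost from confinement since $\sqrt{s_t}=o(r(t))$; then the conditional mean is a factor $e^{-\beta C}(1+o(1))$ below the threshold (using $\lambda_{d,r(t)}s_t = O(1/r(t)) \to 0$ and $|X_{s_t}|/r(t)\to 0$ to identify the eigenfunction ratio as $1+o(1)$), so Markov's inequality finishes it. This part is sound and would go through with the routine details filled in.

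The upper bound is where the theorem's content lies, and your treatment of it is a sketch with a genuine gap rather than an argument. Concretely: (a) in the McKean renewal identity $u(t,0) = e^{-\beta t}(\cdots) + \int_0^t \beta e^{-\beta s}\mathbf{E}_0[u(t-s,X_s)^2\mathbf{1}_{\sigma_{B_t}>s}]\,ds$, the split at $s_t^*=\kappa r(t)/\beta$ does \emph{not} immediately give the desired bound --- the tail $\int_{s_t^*}^t$ only picks up the prefactor $e^{-\beta s_t^*}$ multiplied by a conditional expectation that you still need to control, while the head $\int_0^{s_t^*}$ carries the bulk of the mass and needs an argument that iteration contracts it; (b) the claim that $u(t-s,\cdot)$ is ``uniformly bounded away from $1$ inside $B_t$'' is false near $\partial B_t$ (where $u\equiv 1$), so you would need a quantitative interior bound and to track how often the killed Brownian spine spends near the boundary; (c) the restriction $\kappa\leq \sqrt{\beta/2}$ is mentioned only as a plausibility heuristic, but the whole point of the upper bound is to show that \emph{no} mechanism can beat the suppress-branching strategy in this range, and that is precisely the step you defer. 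As written, the proposal proves the easy half and gestures at the hard half without closing it; the upper bound would need to be built out in full before the proof could be considered complete.
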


\section{The quenched environment}\label{section4}

In this section, we construct the a.s., that is, the quenched environment for the problem of BBM among soft obstacles. The following lemma is a stronger version of \cite[Lemma 1]{O2021} and \cite[Lemma 4.5.2]{S1998}, and will be used to prepare the a.s.-environment for the soft obstacle problem.

\begin{lemma}[A.s.\ clearings, soft obstacles, $d\geq 2$]\label{lemma1}
Let $a\in\mathbb{R}_+$, $b\geq 0$, and $c\geq 1$ be fixed, and define the function $f:\mathbb{R}_+\to\mathbb{N}$ by
$$ f(\ell)=\left\lceil e^{a\ell^{3/2}} \right\rceil .$$
For $\ell>0$, let $x_1,\ldots,x_{f(\ell)}$ be any set of $f(\ell)$ points in $\mathbb{R}^d$, and define the cubes $C_{j,\ell}=x_j+[-\ell,\ell]^d$, $1\leq j\leq f(\ell)$. Then, in $d\geq 2$, on a set of full $\mathbb{P}$-measure, there exists $\ell_0>0$ such that for each $\ell\geq \ell_0$, each of $C_{1,\ell},C_{2,\ell},\ldots,C_{f(\ell),\ell}$ contains a clearing of radius $R_\ell+b$, where $R_\ell$ is given by
\begin{equation}   
R_\ell:=\frac{R_0}{5^{1/d}}(\log c\ell)^{1/d} ,\:\:\: \ell>1. \label{eqrell}    
\end{equation}     
\end{lemma}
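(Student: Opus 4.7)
The plan is a Borel--Cantelli argument: partition each cube into many disjoint candidate clearings, use independence of the PPP on disjoint Borel sets to bound the probability that none of them is a clearing, union-bound over the $f(\ell)$ cubes, and sum over $\ell$. Because the PPP is stationary, the resulting probability bound is uniform over the location of each $C_{j,\ell}$, so the proof works for any fixed choice of cube centers $(x_j)$.

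Fix $r_W>0$ with $\mathrm{supp}(W)\subseteq \bar{B}(0,r_W)$. By \eqref{eqpotential}--\eqref{eqtraprule}, $\bar{B}(y,R_\ell+b)\subseteq K^c(\omega)$ as soon as the enlarged ball $\bar{B}(y,R_\ell+b+r_W)$ is free of atoms of $\Pi$, an event of probability
$$p_\ell:=\exp\bigl[-\nu\omega_d(R_\ell+b+r_W)^d\bigr].$$
Since $\nu\omega_d R_0^d=d$ and $R_\ell^d=\tfrac{R_0^d}{5}\log(c\ell)$, for any $\varepsilon>0$ and all sufficiently large $\ell$,
$$p_\ell \,\geq\, (c\ell)^{-d/5-\varepsilon}.$$

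Next, inside each $C_{j,\ell}=x_j+[-\ell,\ell]^d$ I would pack
$$M_\ell := \Bigl\lfloor \tfrac{\ell}{R_\ell+b+r_W}\Bigr\rfloor^d$$
pairwise disjoint closed balls of radius $R_\ell+b+r_W$, centered on a cubic grid of spacing $2(R_\ell+b+r_W)$ inside $C_{j,\ell}$. Since $R_\ell=O((\log\ell)^{1/d})$, we have $M_\ell\geq c_1\ell^d/\log\ell$ for large $\ell$. Independence of the PPP on disjoint sets gives
$$\mathbb{P}\bigl(C_{j,\ell}\text{ contains no clearing of radius } R_\ell+b\bigr)\leq (1-p_\ell)^{M_\ell}\leq\exp(-M_\ell p_\ell),$$
with the same bound for every $j$ by stationarity. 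A union bound over $1\leq j\leq f(\ell)$ then yields
$$q_\ell:=\mathbb{P}\Bigl(\exists\, j\leq f(\ell):\,C_{j,\ell}\text{ has no clearing of radius } R_\ell+b\Bigr)\leq e^{a\ell^{3/2}+1}\exp\!\left(-c_2\,\frac{\ell^{4d/5-\varepsilon}}{\log \ell}\right).$$

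Choose $\varepsilon$ small enough that $4d/5-\varepsilon>3/2$---possible precisely because $d\geq 2$, since $4d/5>3/2$ iff $d>15/8$. Then the negative exponent dominates $a\ell^{3/2}$, so $q_\ell$ decays super-polynomially and $\sum_{\ell\in\mathbb{N}}q_\ell<\infty$. Borel--Cantelli gives the conclusion along integer $\ell$; the extension to real $\ell$ is routine, e.g.\ by running the same argument with target radius $R_{\lceil\ell\rceil}+b$ inside the smaller cube $C_{j,\lfloor\ell\rfloor}\subseteq C_{j,\ell}$ and using monotonicity of $R_\ell$.

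The main delicate point---and the whole reason for the factor $5^{1/d}$ in the definition of $R_\ell$---is the exponent arithmetic $4d/5>3/2$ in dimension $d\geq 2$. A larger constant in place of $5$ would work equally well; a smaller one would fail already in $d=2$, which is exactly what forces the stated dimensional restriction. Everything else is standard stationarity and independence for the PPP.
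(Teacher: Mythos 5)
Your proof is correct and takes essentially the same route as the paper's: partition each cube into disjoint candidate balls of radius $\sim R_\ell$, use the Poisson void probability and independence on disjoint sets to bound the chance of no clearing by $\exp(-M_\ell p_\ell)$, union-bound over the $f(\ell)$ cubes, sum over integer scales via Borel--Cantelli, and extend to real $\ell$ by a padding/monotonicity argument. The only cosmetic differences are that you make the enlargement by $r_W=\sup\{|x|:W(x)>0\}$ explicit (a point the paper treats implicitly) and your exponent bookkeeping lands on $4d/5-\varepsilon>3/2$ where the paper records $14/9>3/2$; both are the same arithmetic behind the factor $5^{1/d}$ and the restriction $d\geq 2$.
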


\begin{proof}
Let $x_1,x_2,\ldots$ be a sequence of points in $\mathbb{R}^d$, and $C_{j,\ell}:=x_j+[-\ell,\ell]^d$ for $j=1,2,\ldots$ For $k\geq 0$, let $A_{\ell,k}$ be the event that there is a clearing of radius $R_\ell+k$ in each $C_{1,\ell},C_{2,\ell},\ldots,C_{f(\ell+1),\ell}$. Also, for $k\geq 0$, define
$$E_{\ell,k}=\{[-\ell,\ell]^d\:\:\text{contains a clearing of radius $R_\ell+k$}\}.$$
Then, by the homogeneity of the PPP and the union bound, 
\begin{equation}
\mathbb{P}(A_{\ell,k}^c)\leq f(\ell+1) \mathbb{P}(E_{\ell,k}^c). \label{eq1lemma3}
\end{equation}
We now estimate $\mathbb{P}(E_{\ell,k}^c)$. Partition $[-\ell,\ell]^d$ into smaller cubes of side length $2(R_\ell+k)$. Inscribe a ball of radius $R_\ell+k$ in each smaller cube, and bound $\mathbb{P}(E_{\ell,k}^c)$ from above as 
\begin{equation}
\mathbb{P}(E_{\ell,k}^c)\leq \left[1-e^{-\nu\omega_d (R_\ell+k)^d}\right]^{\lfloor\ell/(R_\ell+k)\rfloor^d} 
\leq \exp\left[-\left\lfloor\frac{\ell}{R_\ell+k}\right\rfloor^d e^{-\nu\omega_d (R_\ell+k)^d}\right], \label{eq2lemma3}
\end{equation}
where the estimate $1+x\leq e^x$ is used.
Let 
$$\alpha_\ell:=\left\lfloor \ell/(R_\ell+k) \right\rfloor^d e^{-\nu\omega_d (R_\ell+k)^d}.$$
Then, using \eqref{eqro} and \eqref{eqrell}, and that $\log\lfloor\ell/(R_\ell+k) \rfloor\geq \log\frac{\ell}{2 R_\ell}$ for large $\ell$, it follows that
\begin{align}
\log \alpha_\ell&\geq d\log \ell-d\log(2 R_\ell)-\nu\omega_d(R_\ell+k)^d \nonumber \\
&\geq d\log \ell-d\log(2 R_\ell)-\frac{d}{R_0^d}\left[(19/18)^{1/d}R_\ell\right]^d \nonumber \\
&\geq \left(d-\frac{2d}{9}\right)\log \ell \geq \frac{14}{9}\log\ell, \label{eq3lemma3}
\end{align}
for all large $\ell$, where the last line follows due to \eqref{eqrell} and since $d\geq 2$ by assumption. It follows from \eqref{eq2lemma3} and \eqref{eq3lemma3} that for a given $k>0$, for all large $\ell$,
\begin{equation*}
\mathbb{P}(E_{\ell,k}^c)\leq e^{-\alpha_\ell}\leq e^{-\ell^{14/9}}.
\end{equation*}
Then, \eqref{eq1lemma3} yields
\begin{equation} \label{borelcantelli}
\sum_{n=1}^\infty \mathbb{P}\left(A_{n,k}^c\right) \leq c(n_0)+\sum_{n=n_0}^\infty \left\lceil e^{a(n+1)^{3/2}} \right\rceil e^{-n^{14/9}}<\infty, 
\end{equation} 
where $c(n_0)$ is a constant that depends on $n_0$. Applying Borel-Cantelli lemma, we conclude that with $\mathbb{P}$-probability one, only finitely many $A_{n,k}^c$ occur. That is, $\mathbb{P}(\Omega_s)=1$, where
\begin{equation} \label{eqlemma10}
\Omega_s=\{\omega:\exists n_1=n_1(\omega)\:\:\forall n\geq n_1,\:\:\text{each}\:\:C_{1,n},\ldots,C_{f(n+1),n}\:\:\text{has a clearing of radius $R_n+k$} \} .
\end{equation}
Let $\omega_0\in\Omega_s$, and $n_1=n_1(\omega_0)$ be as in \eqref{eqlemma10}. Observe that
\begin{equation} 
R_{n+1}-R_n\leq \frac{R_0}{5^{1/d}}\left[(\log c(n+1))^{1/d}-(\log c n)^{1/d}\right] \rightarrow 0, \quad n\to\infty . \nonumber
\end{equation}
In particular, there exists $n_2\in\mathbb{N}$ such that for all $n\geq n_2$, $R_{n+1}-R_n\leq 1$. Choose $k=b+1$. (So far the choice of $k>0$ was arbitrary.) Denote by $a\vee b$ the maximum of the numbers $a$ and $b$. To complete the proof, it suffices to show that in the environment $\omega_0$ for each $\ell\geq n_3:=n_1\vee n_2$, each $C_{1,\ell},\ldots,C_{f(\ell),\ell}$ contains a clearing of radius $R_\ell+b$. Take $\ell\geq n_3$ so that there exists $n\geq n_3$ with $n\leq \ell\leq n+1$. Fix this integer $n$. Then, since $R_\ell$ is increasing in $\ell$, we have
\begin{equation} \label{tavsancik2}
R_\ell+b \leq R_{n+1}+b \leq R_n+1+b = R_n+k .
\end{equation}
Furthermore, 
\begin{equation} \label{tavsancik30}
f(\ell)=\left\lceil e^{a\ell^{3/2}} \right\rceil \leq \left\lceil e^{a(n+1)^{3/2}} \right\rceil = f(n+1) .
\end{equation}
Then, \eqref{eqlemma10}, \eqref{tavsancik2} and \eqref{tavsancik30} imply that for $\ell\geq n_3$, each of $C_{1,\ell},\ldots,C_{f(\ell),\ell}$ contains a clearing of radius $R_\ell+b$. This completes the proof since the choice of $\omega_0\in\Omega_s$ was arbitrary and $\mathbb{P}(\Omega_s)=1$.
\end{proof}

Next, we use Lemma~\ref{lemma1} with a suitably chosen collection of points $\left(x_j:1\leq j\leq f(\ell)\right)$ and a set of parameters $\ell$, $a$, $c$ in order to prepare an a.s.-environment with `high' concentration of `large' clearings, that is, in which the covering radius of the `large' clearings is sufficiently small. We will use this a.s.-environment as the quenched setting for the problem of BBM among soft obstacles. 
 
\begin{proposition}[An a.s.-environment, soft obstacles, $d\geq 2$] \label{prop3}
Let $k>0$ be fixed, and $C(0,kt)=[-kt,kt]^d$ be the cube centered at the origin with side length $2kt$. Let $\rho:\mathbb{R}_+\to\mathbb{R}_+$ be such that
\begin{equation} \label{eqrho}
\rho(t)=(\log t)^{2/3}, \quad t>1  .
\end{equation}
For $b>0$, define the set of environments $\Omega_s=\Omega_s(k,b)$ as
\begin{equation} \label{eqenviron}
\Omega_s=\{\omega\in\Omega:\exists\:t_0\:\:\forall\:t\geq t_0,\:\:\forall\:x\in C(0,kt)\:\: \exists\:y\in B(x,\rho(t))\:\:\text{such that}\: B\left(y,R_{\rho(t)}+b\right)\subseteq K^c\}, 
\end{equation}
where $R_{\rho(t)}$ is as in \eqref{eqrell} with $c=1$. Then, in $d\geq 2$, $\mathbb{P}(\Omega_s)=1$.
\end{proposition}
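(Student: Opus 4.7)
The plan is to derive Proposition~\ref{prop3} from Lemma~\ref{lemma1} by a discretization of $t$. For $n\in\mathbb{N}$, define $T_n:=\exp((2\sqrt{d}\,n)^{3/2})$, chosen so that $\rho(T_n)=2\sqrt{d}\,n$. For each $n$, cover the cube $C(0,kT_{n+1})$ by a regular grid of tiles $C_{j,n}:=x_j^{(n)}+[-n,n]^d$ of side $2n$; the number of tiles satisfies $N(n)\leq \lceil kT_{n+1}/n\rceil^d$, giving $\log N(n)=d(2\sqrt{d})^{3/2}n^{3/2}+o(n^{3/2})$. Fix $a>d(2\sqrt{d})^{3/2}=2\sqrt{2}\,d^{7/4}$, so that $N(n)\leq f(n)=\lceil e^{an^{3/2}}\rceil$ for all large $n$.

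Now apply Lemma~\ref{lemma1} with this $a$, with $c=2\sqrt{d}$ in \eqref{eqrell}, and with the parameter $b$ of that lemma replaced by $b+1$; for each $n$ take as points the lattice sites $x_1^{(n)},\ldots,x_{N(n)}^{(n)}$ (padded arbitrarily up to $f(n)$). The homogeneity of the PPP renders the probability bounds in the proof of Lemma~\ref{lemma1} independent of the actual positions of the points, so the lemma provides a full-measure event $\Omega_s$ and an integer $n_0=n_0(\omega)$ such that for every $\omega\in\Omega_s$ and every $n\geq n_0$ each tile $C_{j,n}$ contains a ball of radius
\[ R_n+b+1=\frac{R_0}{5^{1/d}}\bigl(\log 2\sqrt{d}\,n\bigr)^{1/d}+b+1 \]
that is disjoint from $K(\omega)$.

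Fix $\omega\in\Omega_s$ and $t\geq T_{n_0}$, and choose $n\geq n_0$ with $t\in[T_n,T_{n+1}]$. Any $x\in C(0,kt)\subseteq C(0,kT_{n+1})$ lies in some tile $C_{j,n}$; let $y$ be the center of the corresponding clearing. Both $x,y\in C_{j,n}$, so $|y-x|\leq 2n\sqrt{d}=\rho(T_n)\leq \rho(t)$, whence $y\in B(x,\rho(t))$. Since $\rho(t)\leq \rho(T_{n+1})=2\sqrt{d}(n+1)$, for all large $n$,
\[ R_{\rho(t)}-R_n\leq \frac{R_0}{5^{1/d}}\bigl[\bigl(\log 2\sqrt{d}(n+1)\bigr)^{1/d}-\bigl(\log 2\sqrt{d}\,n\bigr)^{1/d}\bigr]\leq 1, \]
so the clearing radius $R_n+b+1\geq R_{\rho(t)}+b$, giving $B(y,R_{\rho(t)}+b)\subseteq K^c$. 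This places $\omega$ in the set \eqref{eqenviron}.

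The delicate point is scale matching: tiles must be small enough (side $\sim\rho(t)$) so each clearing lies in $B(x,\rho(t))$, yet the polynomial-in-$t$ number of tiles needed to cover $C(0,kt)$ must stay within Lemma~\ref{lemma1}'s budget $f(n)=\lceil e^{an^{3/2}}\rceil$. The exponent $2/3$ in $\rho(t)=(\log t)^{2/3}$ is conjugate to the exponent $3/2$ in $f$, making $f(n)$ a genuine polynomial in $T_n$ and hence sufficient to absorb the tile count $(kT_{n+1}/n)^d$.
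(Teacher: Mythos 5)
Your proof is correct and takes essentially the same approach as the paper: cover $C(0,kt)$ by cubes of side comparable to $\rho(t)$ and invoke Lemma~\ref{lemma1} with $a=d(2\sqrt{d})^{3/2}$ (or slightly larger), $c=2\sqrt{d}$, and the packing centers as the points $x_j$. The paper sets $\ell=\rho(t)/(2\sqrt{d})$ directly and applies Lemma~\ref{lemma1} with continuous $\ell$ and $t$-dependent centers, whereas you make the underlying integer discretization $t\in[T_n,T_{n+1}]$ explicit and absorb the resulting slack by running Lemma~\ref{lemma1} with $b$ replaced by $b+1$ — two phrasings of the same argument, with your version spelling out a step the paper leaves implicit.
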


\begin{proof}
Consider the simple cubic packing of $C(0,kt)$ with balls of radius $\rho(t)/(2\sqrt{d})$. Then, at most
\begin{equation} \label{eq12}
n_t:=\left\lceil \frac{kt}{\rho(t)/(2\sqrt{d})}  \right\rceil^d 
\end{equation}
balls are needed to completely pack $C(0,kt)$, say with centers $\left(z_j:1\leq j\leq n_t\right)$. For each $j$, let $B_t^j=B(z_j,\rho(t)/(2\sqrt{d}))$. Now consider generically a simple cubic packing of $\mathbb{R}^d$ by balls $(\mathcal{B}_j:j\in\mathbb{N})$ of radius $R>0$, and let $x\in\mathbb{R}^d$ be any point. It is easy to deduce from elementary geometry that $\min_j \max_{z\in \mathcal{B}_j}|x-z|<(\sqrt{d}/2)4R$, where $\sqrt{d}/2$ is the distance between the center and any vertex of the $d$-dimensional unit cube $C(0,1/2)$. Then, since the radius of the packing balls is $\rho(t)/(2\sqrt{d})$ in our case, it follows that
\begin{equation}  \label{eq1100}
\forall\,x\,\in C(0,kt), \quad
\underset{1\leq j\leq n_t}{\min}\:\underset{z\in B_t^j}{\max}\:|x-z|<\rho(t). 
\end{equation}
We now combine the simple cubic packing of $C(0,kt)$ and Lemma~\ref{lemma1}. Set $\ell=\rho(t)/(2\sqrt{d})=(\log t)^{2/3}/(2\sqrt{d})$ in Lemma~\ref{lemma1}. Then, $t=e^{(2\ell\sqrt{d})^{3/2}}$, and it follows from \eqref{eq12} that for all large $\ell$,
\begin{equation}
n_t\leq \frac{(2k)^d}{\ell^d} e^{d(2\ell\sqrt{d})^{3/2}} \leq e^{d(2\ell\sqrt{d})^{3/2}}. \nonumber
\end{equation}
Now, with the choices $\ell=\rho(t)/(2\sqrt{d})$, $a=d(2\sqrt{d})^{3/2}$, $c=2\sqrt{d}$ and $x_j=z_j$ for $j\leq n_t$, where $(z_j:1\leq j\leq n_t)$ are as above, in view of \eqref{eq1100} and since $\ell\to\infty$ as $t\to\infty$, Lemma~\ref{lemma1} implies the following. For fixed $k>0$ and $b>0$, on a set of full $\mathbb{P}$-measure, there exists $t_0>0$ such that for all $t\geq t_0$, $B(x,\rho(t))$ contains a clearing of radius $\frac{R_0}{5^{1/d}}(\log \rho(t))^{1/d}+b$ for each $x\in C(0,kt)$.
\end{proof}

In the subsequent proofs, $\Omega_s=\Omega_s(k,b)$ given in \eqref{eqenviron} with a suitable pair $(k,b)$, will be our quenched environment for the problem of BBM among soft obstacles.

The term `overwhelming probability' is henceforth used with a precise meaning, which is given as follows.
\begin{definition}[Overwhelming probability]
Let $(A_t)_{t>0}$ be a family of events indexed by time $t$, and $\mathcal{P}$ be the relevant probability. We say that $A_t$ occurs \emph{with overwhelming probability} if 
$$\underset{t\to\infty}{\lim}\mathcal{P}(A_t^c)=0 . $$ 
\end{definition}

\section{Hitting the moderate clearings}\label{section5}

In this section we show that on the set of full $\mathbb{P}$-measure developed in the previous section, that is, on $\Omega_s$ given in \eqref{eqenviron}, the BBM hits clearings of a certain size over $[0,t]$ for large $t$ with overwhelming $\widehat{P}^\omega$-probability. In the rest of the paper, two types of clearings will be considered according to size: \emph{moderate} clearings have $r(t)\sim (\log\log t)^{1/d}$ and \emph{large} clearings have $r(t)\sim (\log t)^{1/d}$, where $r=r(t)$ is used as the radius of a clearing. The following lemma will play a central role in the proof of the lower bound of Theorem~\ref{thm1}. It is on the hitting probability and the position of hitting over $[0,t]$ of a BBM among soft obstacles to moderate clearings conditioned on survival over $[0,t]$, and says that with overwhelming probability, the BBM hits such a clearing within the horizon $[-kt,kt]^d$ over $[0,t]$. As before we use $Z=(Z_t)_{t\geq 0}$ to denote a BBM in $d$ dimensions, $P_x$ as the law of a free BBM started with a single particle at position $x\in\mathbb{R}^d$, and $P_x^\omega$ as the conditional law of a BBM started with a single particle at position $x\in\mathbb{R}^d$ in the environment $\omega$. Set $P^\omega=P^\omega_0$. Also, recall the definition of $R_0$ from \eqref{eqro}. The \emph{range} (accumulated support) of $Z$ is the process defined by 
\begin{equation} 
\mathcal{R}(t)=\bigcup_{0\leq s\leq t} \text{supp}(Z_s). \nonumber
\end{equation}

\begin{lemma}[Hitting probability of BBM to moderate clearings]\label{lemma2}
Let $r:\mathbb{R}_+\to\mathbb{R}_+$ be such that 
\begin{equation} \label{part1eqradius}
r(t)=\frac{1}{3}\frac{R_0}{5^{1/d}}\left(\frac{2}{3}\right)^{1/d}(\log \log t)^{1/d}   , \quad t>e . 
\end{equation}
Let $k>\sqrt{2\beta}$ be fixed. For $\omega\in\Omega$ and $t>0$, define  
\begin{equation}
\Phi_t^\omega=\{x\in\mathbb{R}^d:B(x,r(t))\subseteq K^c(\omega)\}, \quad\:\: \widehat{\Phi}_t^\omega=\Phi_t^\omega \cap [-kt,kt]^d .  \label{eqphi}
\end{equation}
Then, in $d\geq 2$, there exists $\Omega_1\subseteq\Omega$ with $\mathbb{P}(\Omega_1)=1$ such that for every $\omega\in\Omega_1$, 
\begin{equation} 
\underset{t\to \infty}{\lim}\: P^\omega\left(\mathcal{R}(t)\cap \widehat{\Phi}_t^\omega=\emptyset \:\big\vert\: S_t\right)=0 . \nonumber
\end{equation}
\end{lemma}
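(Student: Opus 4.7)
The plan is to combine the density of moderate clearings (Proposition~\ref{prop3}), the lower bound on $P^\omega(S_t)$ (Proposition~\ref{prop1}), and the tubular estimate (Proposition~A). With the choice $\rho(t)=(\log t)^{2/3}$, the radius $R_{\rho(t)}$ produced by Proposition~\ref{prop3} equals $3r(t)$ by \eqref{eqrell} and \eqref{part1eqradius}. Hence on a set $\Omega_1\subseteq\Omega$ of full $\mathbb{P}$-measure, for all $t$ large every $x\in[-kt,kt]^d$ lies within distance $\rho(t)$ of the center of a $3r(t)$-clearing, so that $\widehat{\Phi}_t^\omega$ is $\rho(t)$-dense in the box. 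By Proposition~\ref{prop1}, we may intersect with a further full-measure set on which $P^\omega(S_t)\geq c_1>0$ for all $t$ large, reducing the task to showing $P^\omega\bigl(\mathcal{R}(t)\cap\widehat{\Phi}_t^\omega=\emptyset,\,S_t\bigr)\to 0$. Using $k>\sqrt{2\beta}$ and standard estimates on the maximum of BBM, the event $\mathcal{R}(t)\not\subseteq[-kt,kt]^d$ has exponentially small probability, so I may further restrict to paths confined to the box and, equivalently, to bounding $P^\omega(\mathcal{R}(t)\cap\Phi_t^\omega=\emptyset,\,S_t)$.

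The main step is then to show that the BBM reaches $\Phi_t^\omega$ by time $t$. I would partition $[0,t]$ into blocks of length $\tau=\tau(t)$ of polylogarithmic order and apply the strong Markov property at the block endpoints. On $S_t$, at the start of each block there is at least one alive particle somewhere in $[-kt,kt]^d$; by the density property a moderate clearing center lies within $\rho(t)$ of its position, and by Proposition~A the probability that this particle traverses a tube of radius $r(t)/2$ to reach that center in time $\tau$ is at least
\[
 q(t)\;\geq\;c_d\exp\!\Big(-\tfrac{\lambda_d\tau}{(r(t)/2)^2}-\tfrac{\rho(t)^2}{2\tau}\Big).
\]
The cost of not being killed along the tube is controlled via the auxiliary bound $\sup_{x\in[-kt,kt]^d}V(x,\omega)=O(\log t)$, which holds on a further full-measure subset by standard Poisson extreme-value tails (the number of Poisson atoms in any ball of radius $\sup_{y\in K_0}|y|$ inside $[-kt,kt]^d$ is $O(\log t)$). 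Iterating over the $\lfloor t/\tau\rfloor$ blocks then yields an upper bound of the form $(1-q(t))^{\#\text{attempts}}$.

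The principal obstacle is that the single-particle version of this bound is not strong enough: with $q(t)=e^{-\mathrm{polylog}(t)}$ of exponent greater than~$1$, the product $q(t)\cdot t/\tau$ actually tends to zero, not to infinity. The resolution is to exploit the many alive particles available at each block start: each provides an approximately independent tubular attempt, so the per-block failure probability is at most $(1-q(t))^{N_{j\tau}}$, which vanishes once $N_{j\tau}\cdot q(t)\gg 1$. Establishing a quenched lower bound on $N_{j\tau}$ that grows fast enough --- before the LLN of Theorem~\ref{thm1} itself is available --- uses the positive branching rate $\beta$ in $K^c$ together with Proposition~\ref{prop1} and careful tracking of the killing potential along trajectories, and this delicate interplay between survival conditioning, mass growth, and hitting probability is where the argument becomes most intricate.
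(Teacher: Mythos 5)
Your overall architecture matches the paper's: partition $[0,t]$ into blocks of polylogarithmic length, at the start of each block apply the Markov property to one alive particle, use the clearing-density statement (Proposition~\ref{prop3}) and the tubular estimate (Proposition~A) to hit a moderate clearing, and pre-multiply by a constant via Proposition~\ref{prop1} to remove the conditioning on $S_t$. However, the proposal diverges at the decisive point and ends with a step you yourself flag as not carried out, so it is not a complete proof.

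The divergence is the bound on the killing potential along the tube. You bound $\sup_{[-kt,kt]^d}V(\cdot,\omega)=O(\log t)$ and conclude, correctly for that bound, that a per-block success probability of order $\exp[-\tau(t)\cdot O(\log t)]$ is too small for the single-particle iteration to close, since $\tau(t)\log t\gg\log t$ once $\tau(t)\to\infty$. The paper instead applies Sznitman's Lemma~4.5.2 \emph{to the tube} $T_t$ rather than to the whole box: since $T_t\subseteq B(0,2\rho(t))$ (and at later blocks a ball of radius $2\rho(t)$ around the current particle), one gets $\sup_{T_t}V\leq\log\rho(t)$, and with the choice $h(t)=\rho(t)=(\log t)^{2/3}$ the per-block cost is $\exp[-2h(t)\log h(t)]$, whose exponent is $o(\log t)$. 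Hence $\bigl(1-e^{-2h(t)\log h(t)}\bigr)^{t/h(t)}\to 0$ already with a single particle per block, and no lower bound on $N_{j\tau}$ is needed. This is the mechanism you are missing: the tube is a thin, short object, and the supremum of $V$ over it is governed by $\log\rho(t)$, not $\log t$.

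Your proposed remedy for the failure of the single-particle count, namely replacing the single attempt by $(1-q(t))^{N_{j\tau}}$ and then proving a quenched lower bound on $N_{j\tau}$ under the survival conditioning, is the genuine gap. You acknowledge that establishing such a bound before Theorem~\ref{thm1} is available is ``intricate,'' but in fact it is essentially circular: a lower bound on the number of alive particles at intermediate times conditionally on $S_t$ is precisely the content of the lower bound of Theorem~\ref{thm1}, which in the paper is proved \emph{using} Lemma~\ref{lemma2}. Moreover, even granting such a bound, the claim that the per-block tubular attempts are ``approximately independent'' after conditioning on $A_{j-1,t}$ (which involves both the past hitting events and the survival events $S_{j-1,t}$) would itself require justification, since that conditioning entangles the particles' past. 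The paper's route avoids all of this by making the per-block cost small enough that one particle suffices. You should replace the global bound on $\sup V$ by the local one over the tube, at which point the multi-particle step and the $N_{j\tau}$ lower bound both become unnecessary and the argument closes along the lines of the paper.
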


\begin{proof}
Call $x\in\mathbb{R}^d$ a \emph{good point} for $\omega\in\Omega$ at time $t$ if $B(x,r(t))$ is a clearing (see Definition~\ref{def1}) in the random environment $\omega$. That is,
$$ \Phi_t^\omega:=\{x\in\mathbb{R}^d:B(x,r(t))\subseteq K^c(\omega)\} $$
is the set of good points associated to the pair $(\omega,t)$. Given $\omega\in\Omega$, for $t>0$ define the events
$$ E_t=E_t(\omega)=\{\mathcal{R}(t)\cap \widehat{\Phi}_t^\omega=\emptyset\} . $$
In words, $E_t$ is the event that the BBM does not hit a good point inside $[-kt,kt]^d$ associated to the pair $(\omega,t)$ over $[0,t]$. By Proposition~\ref{prop1}, on a set of full $\mathbb{P}$-measure, there exists $c_1=c_1(\omega)>0$ such that for all large $t$,
\begin{equation} \label{part1eq11}
P^\omega(E_t \mid S_t) = \frac{P^\omega(E_t \cap S_t)}{P^\omega(S_t)}\leq c_1 P^\omega(E_t\cap S_t) .
\end{equation}  
In the rest of the proof, we bound $P^\omega(E_t\cap S_t)$ from above in a typical environment $\omega$.

For $t>1$, introduce the time scale 
$$ h(t):=(\log t)^{2/3} . $$ 
For notational convenience\footnote{We would like to avoid the floor function in notation.}, suppose that $t/h(t)$ is an integer. Split the interval $[0,t]$ into $t/h(t)$ pieces as
$$ [0,h(t)],\: [h(t),2h(t)],\:\ldots\:,[t-h(t),t], $$
and for $j=1,2,\ldots,t/h(t)$, define the intervals $I_{j,t}$ as
$$  I_{j,t}=[(j-1)h(t),j h(t)] . $$
Next, for $t>e$, introduce two space scales as follows: use $\rho(t)$, previously defined in \eqref{eqrho}, as the larger space scale, and $r(t)$ given in \eqref{part1eqradius} as the smaller space scale. That is, we have
$$ \rho(t)=h(t)=(\log t)^{2/3}, \quad r(t)=\frac{1}{3}\frac{R_0}{5^{1/d}}\left(\frac{2}{3}\right)^{1/d}(\log \log t)^{1/d} .$$ 
Observe that $2r(t)\leq R_{\rho(t)}$ for all large $t$, where $R_\ell$ is as in \eqref{eqrell} with $c=1$ therein. Hence, for each $\omega\in\Omega_s(k,0)$ (see \eqref{eqenviron} for the definition), for all large $t$ any ball of radius $\rho(t)$ centered within $C(0,kt)$ contains a clearing of radius $2r(t)$. In the rest of the proof, we set $\Omega_s=\Omega_s(k,0)$ with $k>\sqrt{2\beta}$ fixed. Recall that $\mathbb{P}(\Omega_s)=1$ by Proposition~\ref{prop3}. 

For an interval $I\subseteq [0,\infty)$, define the range of $Z$ over $I$ as 
\begin{equation} 
\mathcal{R}(I)=\bigcup_{s\in I} \text{supp}(Z_s). \nonumber
\end{equation} 
Next, for $t>1$ and $j=1,2,\ldots,t/h(t)$, define the events
$$  E_{j,t}=\{\mathcal{R}(I_{j,t})\cap \widehat{\Phi}_t^\omega=\emptyset\}, \:\: \quad S_{j,t}:=\{N_{jh(t)}\geq 1\}.   $$
Observe that 
\begin{equation} \label{eq1part3}
E_t \cap S_t = \bigcap_{j=1}^{t/h(t)} (E_{j,t}\cap S_{j,t}).
\end{equation}
Also, for $t>0$, let
\begin{equation} \label{eqmt}
M_t:=\inf\{r\geq 0:\mathcal{R}(t)\subseteq B(0,r)\} 
\end{equation} 
be the radius of the minimal ball containing the range of BBM at time $t$, and for $t>1$ and $j=1,2,\ldots,t/h(t)$ define the events
\begin{equation} \label{eqfj}
F_{j,t}=\{M_{j h(t)}\leq k j h(t)\}, \:\: \quad F_t=F_{1,t} \cap \ldots \cap F_{t/h(t),t}  .
\end{equation}
We now apply repeated conditioning at times $h(t), 2h(t), \ldots, t-h(t)$, and at each intermediate time $j h(t)$, we will throw away the rare event $F_{j,t}^c$. Note that $F_{j,t}^c$ is indeed a rare event since $k>\sqrt{2\beta}$ by assumption and it is well-known that the \emph{speed} of a strictly dyadic BBM is $\sqrt{2\beta}$. Let $\omega\in\Omega_s$. Then, using \eqref{eq1part3}, \eqref{eqfj}, and the union bound, 
\begin{align} 
P^\omega(E_t, S_t)&\leq P^\omega(E_t, S_t, F_t) + P^\omega(F_{1,t}^c) + \ldots + P^\omega(F_{t/h(t),t}^c) \nonumber \\
& = P^\omega\left(\bigcap_{j=1}^{t/h(t)} (E_{j,t}, S_{j,t}, F_{j,t}) \right)+\sum_{j=1}^{t/h(t)}P^\omega(F_{j,t}^c)  \nonumber \\
&= P^\omega\left(\bigcap_{j=2}^{t/h(t)} (E_{j,t}, S_{j,t}, F_{j,t}) \:\bigg\vert\: E_{1,t}, S_{1,t}, F_{1,t}\right)P^\omega(E_{1,t}, S_{1,t}, F_{1,t})+ \sum_{j=1}^{t/h(t)}P^\omega(F_{j,t}^c) \nonumber .
\end{align}
Iterating the argument above at times $2h(t), \ldots, t-h(t)$, and noting that $S_{j,t}=\cap_{k=1}^j S_{k,t}$, we obtain
\begin{equation} \nonumber
P^\omega(E_t, S_t)\leq P^\omega\left(E_{1,t}, S_{1,t}, F_{1,t}\right) \prod_{j=2}^{t/h(t)} P^\omega\left(E_{j,t}, S_{j,t}, F_{j,t} \:\bigg\vert\: S_{j-1,t}\, ,\:\bigcap_{k=1}^{j-1} (E_{k,t}, F_{k,t})\right) + \sum_{j=1}^{t/h(t)}P^\omega(F_{j,t}^c)  
\end{equation}
from which it follows that
\begin{equation} \label{eqmain2}
P^\omega(E_t, S_t)\leq P^\omega(E_{1,t})\prod_{j=2}^{t/h(t)}P^\omega\left(E_{j,t} \:\bigg\vert\: S_{j-1,t}\, ,\:\bigcap_{k=1}^{j-1} (E_{k,t}, F_{k,t})\right)  + \sum_{j=1}^{t/h(t)}P^\omega(F_{j,t}^c).
\end{equation}
In the rest of the proof, we find an upper bound that is valid for large $t$ on the right-hand side of \eqref{eqmain2} in an environment $\omega\in\Omega_s$.

\medskip

\textbf{(i) Upper bound on $P^\omega(F_{j,t}^c)$ in any environment}

To estimate $P^\omega(F_{j,t}^c)=P^\omega(M_{jh(t)}>k j h(t))$, we need some control on the spatial spread of the BBM at time $j h(t)$. We start by noting an $\omega$-wise comparison between a BBM among soft obstacles and a free BBM. (Recall that \emph{free} refers to the model where $V\equiv 0$, that is, there is no killing and the BBM branches at rate $\beta$ everywhere in $\mathbb{R}^d$.) The following stochastic domination is clear since the presence of $V>0$ can only kill particles as well as suppressing their branching, and otherwise has no effect on the motion of particles. As before, we use $(P_y:y\in\mathbb{R}^d)$ for the laws of a free BBM starting with a single particle at $y\in\mathbb{R}^d$, and set $P=P_0$.
\begin{remark}[Comparison $1$, free environment versus soft killing]
For $t>0$ and $B\subseteq\mathbb{R}^d$, let $Z_t(B)$ denote the mass of $Z$ that fall inside $B$ at time $t$. Then, for all $y\in\mathbb{R}^d$, $B\subseteq \mathbb{R}^d$ Borel, $k\in\mathbb{N}$, and $t>0$,
\begin{equation} \label{eqcomp1}
P_{y}(Z_t(B)<k) \leq P_{y}^\omega(Z_t(B)<k) \quad \text{for each $\omega\in\Omega$} .
\end{equation}
\end{remark}
Then, for any $r>0$, it follows by taking $B=(B(0,r))^c$ and $k=1$ in \eqref{eqcomp1} that 
\begin{equation} \label{eqcomp2}
P(M_t\leq r) \leq P^\omega(M_t\leq r) ,
\end{equation}
where $M_t$ is as defined in \eqref{eqmt}. Observe that $M_t/t$ is a kind of speed for the BBM, and measures the spread of $Z$ from the origin over the time interval $[0,t]$.

Let $\mathcal{N}_t$ denote the set of particles of $Z$ that are alive at time $t$, and for $u\in\mathcal{N}_t$, let $(Y_u(s))_{0\leq s\leq t}$ denote the ancestral line up to $t$ of particle $u$. By the \emph{ancestral line up to $t$} of a particle present at time $t$, we mean the continuous trajectory traversed up to $t$ by the particle, concatenated with the trajectories of all its ancestors. Note that when $V\equiv 0$, $(Y_u(s))_{0\leq s\leq t}$ is identically distributed as a Brownian trajectory $(X_s)_{0\leq s\leq t}$ for each $u\in\mathcal{N}_t$. Also note that $N_t=|\mathcal{N}_t|$. Then, using the union bound, for $\gamma>0$,
\begin{equation} \label{eq1part1}
P\left(M_t>\gamma t\right)= P\left(\exists\, u\:\in \mathcal{N}_t:\sup_{0\le s\le t}|Y_u(s)|>\gamma t\right) \le E[N_t]\:\mathbf{P}_0\left(\sup_{0\le s\le t}|X_s|>\gamma t\right). 
\end{equation}
It is a standard result that $E[N_t]=\exp(\beta t)$ (one can deduce this, for example, from \cite[Sect.\ 8.11]{KT1975}), and from \cite[Lemma 5]{OCE2017} we have that $\mathbf{P}_0\left(\sup_{0\le s\le t}|X_s|>\gamma t\right)=\exp[-\gamma^2 t/2(1+o(1))]$. Set $\gamma=k$ and replace $t$ by $j h(t)$ in \eqref{eq1part1}. Then, combining \eqref{eqcomp2} and \eqref{eq1part1}, and recalling that $k>\sqrt{2\beta}$, 
\begin{align} \label{eq2part1}
P^\omega(F_{j,t}^c)=P^\omega(M_{j h(t)}>k j h(t))&\leq P(M_{j h(t)}>k j h(t)) \nonumber \\
&\leq E[N_{jh(t)}]\:\mathbf{P}_0\bigg(\sup_{0\le s\le j h(t)}|X_s|>kj h(t)\bigg) \nonumber \\
&= \exp[jh(t)(\beta-k^2/2)(1+o(1))] .
\end{align}
It follows from \eqref{eq2part1} that when $k>\sqrt{2\beta}$,
\begin{equation} \label{maineqpiece2}
\sum_{j=1}^{t/h(t)}P^\omega(F_{j,t}^c) \leq t \exp[-h(t)(k^2/2-\beta)(1+o(1))] .
\end{equation}

\bigskip

\textbf{(ii) Upper bound on $P^\omega\left(E_{1,t}\right)$ in a typical environment}

\medskip

Next, for $\omega\in\Omega_s$, we find an upper bound on $P^\omega\left(E_{1,t}\right)$ that is valid for large $t$. We will estimate $P^\omega(E_{1,t}^c)$ from below, and in order to do that, since $E_{1,t}^c=\{\mathcal{R}(I_{1,t})\cap \widehat{\Phi}^\omega_t\neq \emptyset\}$, we look for a hitting strategy to $\widehat{\Phi}^\omega_t$. We start by noting an $\omega$-wise comparison between a Brownian motion (BM) among soft obstacles and a BBM among soft obstacles. The following comparison is obvious since each particle of BBM follows a Brownian trajectory while alive under the laws $P^\omega_y$. 
\begin{remark}[Comparison $2$, BM versus BBM]
Let $(\mathbf{P}^\omega_y:y\in\mathbb{R}^d)$ be the laws under which $X$ is a BM starting at position $y$ and is killed at rate $V=V(x,\omega)$ in the environment $\omega$. Then, for all $y\in\mathbb{R}^d$, $t>0$ and $B\subseteq \mathbb{R}^d$ Borel,
\begin{equation} \nonumber
\mathbf{P}_y^\omega\left((\cup_{0\leq s\leq t}\{X_s\}) \cap B\neq\emptyset\right) \leq P_{y}^\omega\left(\mathcal{R}(t)\cap B\neq\emptyset\right) \quad \text{for each $\omega\in\Omega$} .
\end{equation}
That is, even under the killing potential $V$, it is easier for a BBM to hit any set $B$ than it is for a single BM. We note that a similar comparison between a free BBM and a free BM also holds. 
\end{remark}
The remark above implies that
\begin{equation} \label{eqprelim}
\mathbf{P}_0^\omega\left((\cup_{0\leq s\leq h(t)}\{X_s\}) \cap \widehat{\Phi}^\omega_t \neq\emptyset\right) \leq P^\omega(E_{1,t}^c) ,
\end{equation}
and hence, it suffices to estimate $\mathbf{P}_0^\omega\left((\cup_{0\leq s\leq h(t)}\{X_s\}) \cap \widehat{\Phi}^\omega_t \neq\emptyset\right)$ from below. Let $\omega\in\Omega_s$ and choose $t$ large enough. Then, in the environment $\omega$, $B_{1,t}:=B(0,\rho(t))$ contains a clearing of radius $2r(t)$, hence a ball of radius $r(t)$, say $\mathcal{B}_{1,t}$, that is entirely contained in $\Phi^\omega_t$. Since $\rho(t)\leq kt$ for large $t$, we have $\mathcal{B}_{1,t}\subseteq \widehat{\Phi}^\omega_t\cap B_{1,t}$. Let $\mathbf{e}$ be the unit vector in the direction of the center of $\mathcal{B}_{1,t}$ in $\mathbb{R}^d$. Consider the following strategy for a standard BM: over $[0,h(t)]$, avoid being killed by the potential $V$, stay in the tube 
$$  T_t:=\left\{z\in\mathbb{R}^d:\inf_{0\leq s\leq h(t)}\bigg|z-\rho(t)\mathbf{e}\,\frac{s}{h(t)}\bigg|<r(t) \right\}   , $$
and hit $\mathcal{B}_{1,t}$. The probability of this joint event is at least 
\begin{equation} \label{eqjoint}
\exp\left[-h(t)\sup_{x\,\in\,T_t} V(x,\omega)\right] \mathbf{P}_0\left(\sup_{0\leq s\leq h(t)} \bigg|X_s-\rho(t)\mathbf{e}\,\frac{s}{h(t)}\bigg|<r(t)\right) ,
\end{equation}
where the second factor is a lower bound for the probability that the particle stays inside $T_t$ and hits $\mathcal{B}_{1,t}$. Indeed, if the event $\left\{\sup_{0\leq s\leq h(t)} \big|X_s-\rho(t)\mathbf{e}\,\frac{s}{h(t)}\big|<r(t)\right\}$ is realized, this means the particle is in $B(\rho(t)\mathbf{e},r(t))$ at time $h(t)$, which, by continuity of Brownian paths, implies that it must have hit $\mathcal{B}_{1,t}$ over the interval $[0,h(t)]$. By \cite[Lemma 4.5.2]{S1998}, 
\begin{equation} \label{eqjoint1}
\exp\left[-h(t)\sup_{x\,\in\,T_t} V(x,\omega)\right] \geq \exp[-h(t)\log \rho(t)].
\end{equation}
By the tubular estimate in Proposition A, and since $r(t)\geq 1$ for all large $t$,  
\begin{equation} \label{eqjoint2}
\mathbf{P}_0\left(\sup_{0\leq s\leq h(t)} \bigg|X_s-\rho(t)\mathbf{e}\,\frac{s}{h(t)}\bigg|<r(t)\right)\geq  c_d \exp\left[-\lambda_d h(t)-\frac{\rho^2(t)}{2h(t)} \right]   
\end{equation}
for all large $t$, where $c_d>0$ is a constant that only depends on the dimension. Then, it follows from \eqref{eqprelim}-\eqref{eqjoint2} that for all large $t$, 
\begin{equation} \label{eqkey}
P^\omega(E_{1,t}^c) \geq c_d \exp\left[-\left(h(t)\log \rho(t)+\lambda_d h(t)+\frac{\rho^2(t)}{2h(t)}\right) \right] .
\end{equation}
Using $\rho(t)=h(t)$, we see that $\exp[-2h(t)\log h(t)]$ is smaller than the right-hand side of \eqref{eqkey} for large $t$, and hence conclude that for all large $t$,
\begin{equation} \label{eqkey2}
P^\omega(E_{1,t}) \leq 1-e^{-2h(t)\log h(t)} . 
\end{equation}
This completes the estimate for $P^\omega\left(E_{1,t}\right)$ when $\omega\in\Omega_s$.


\bigskip

\textbf{(iii) Applying the Markov property at times $h(t), 2h(t),\ldots,t-h(t)$.}

\medskip

For $t>1$ and $j=2,\ldots,t/h(t)$, abbreviate 
$$ A_{j-1,t}:= S_{j-1,t}\cap (\cap_{i=1}^{j-1}(E_{i,t}, F_{i,t})) .$$
We now estimate $P^\omega(E_{j,t} \mid A_{j-1,t})$ in \eqref{eqmain2}. Recall the definition of $F_{j,t}$ from \eqref{eqfj}. Observe that conditional on $A_{j-1,t}$, at time $(j-1)h(t)$ the BBM has at least one particle alive and all particles are within a distance of $k(j-1)h(t)$ from the origin. Pick any particle that is alive\footnote{For concreteness, we may for instance pick the one that is closest to the origin at time $(j-1)h(t)$.} at time $(j-1)h(t)$, call it $u^*$, and let $y^{(j)}_t:=Y_{u^*}((j-1)h(t))$ denote its position at that time. Since $F_{j-1,t}\subset A_{j-1,t}$ and $k(j-1)h(t)\leq k(t-h(t))$ for all $j=1,\ldots,t/h(t)$, conditional on $A_{j-1,t}$ we have that $y^{(j)}_t\leq k(t-h(t))$. Now let $\omega\in\Omega_s$ and choose $t$ large enough. Then, in the environment $\omega$, $B_{j,t}:=B\big(y^{(j)}_t,\rho(t)\big)$ contains a clearing of radius $2r(t)$, hence a ball of radius $r(t)$, say $\mathcal{B}_{j,t}$, that is entirely contained in $\Phi^\omega_t$ and also in $[-kt,kt]^d$ since $y^{(j)}_t\leq k(t-h(t))$ and $\rho(t)=h(t)$. That is, $\mathcal{B}_{j,t}\subseteq \widehat{\Phi}^\omega_t\cap B_{j,t}$. Then, applying the Markov property at time $(j-1)h(t)$, a tubular estimate argument similar to the one used in step (ii) for the case $j=1$ yields that for all large $t$,
\begin{equation} \label{eqmain3}
P^\omega\left(E_{j,t}\mid A_{j-1,t}\right)\leq 1-e^{-2h(t)\log h(t)} ,\quad j=2,\ldots,t/h(t)  .
\end{equation}
Then, combining \eqref{eqmain2}, \eqref{maineqpiece2}, \eqref{eqkey2} and \eqref{eqmain3} yields the following conclusion. Provided $k>\sqrt{2\beta}$, in any environment $\omega\in\Omega_s$ for all large $t$,
\begin{align}
P^\omega(E_t\cap S_t)&\leq \left[1-e^{-2h(t)\log h(t)}\right]^{t/h(t)}+ t \exp[-h(t)(k^2/2-\beta)(1+o(1))] \nonumber \\
& \leq \exp\left[-e^{-2h(t)\log h(t)}\frac{t}{h(t)}\right]+ t \exp[-h(t)(k^2/2-\beta)(1+o(1))] \nonumber 
\end{align}
where we have used the estimate $1+x\leq e^x$. Since $h(t)=(\log t)^{2/3}$, it follows that    
\begin{equation} \nonumber
\underset{t\to\infty}{\lim}\: P^\omega(E_t\cap S_t) = 0 .
\end{equation} 
This completes the proof of Lemma~\ref{lemma2} in view of \eqref{part1eq11}. 
\end{proof}

\begin{remark}
Note that any conditioning on the events $S_t$ (or on $S$) changes the law of the BBM. In particular, the ancestral lines are no longer Brownian. In the proof of Lemma~\ref{lemma2}, the conditioning on $S_t$ was carried out in stages over successive subintervals $[(j-1)h(t),jh(t)]$ in order to work with Brownian paths.

Also, observe that the trivial bound $P^\omega(E_t\cap S_t)\leq P^\omega(E_t)$ is not useful for proving Lemma~\ref{lemma2}, because the event $E_t$ is realized if the entire process is killed before hitting $\widehat{\Phi}_t^\omega$, which has a probability bounded below by a positive number uniformly for all large $t$. 

In case of mild obstacles, where there is no killing but only a suppression of branching inside traps, each ancestral line is Brownian under $P^\omega$, and therefore it is sufficient to prove the counterpart of Lemma~\ref{lemma2} for a single Brownian motion (see Lemma 2 in \cite{O2021}). In contrast, in case of soft obstacles, one has to estimate $P^\omega(E_t\cap S_t)$ for the entire BBM. On the event $E_t\cap S_t$ there is at least one particle at time $t$ whose ancestral line is Brownian under $P^\omega$ and who has survived up to time $t$, but we don't know which particle, and a standard union bound argument over all possible particles existing at time $t$ would not be successful in showing that $P^\omega(E_t\cap S_t)\to 0$ as $t\to\infty$.

We emphasize that all of the aforementioned difficulties arise due to the soft killing mechanism in the model, which was not present in the mild obstacle problem. 
\end{remark}

\section{Proof of Theorem~\ref{thm1}}\label{section6}

\subsection{Proof of the upper bound}

For the proof of the upper bound, let $\Omega_1\subset\Omega$ be the intersection of the sets of full $\mathbb{P}$-measure in Proposition~\ref{prop1} and Proposition~\ref{prop2}, and let $\omega\in\Omega_1$. Recall that the law $\widehat{P}^\omega$ is defined by $\widehat{P}^\omega(\:\cdot\:)=P^\omega(\:\cdot\: \mid S)$, and denote by $\widehat{E}^\omega$ the corresponding expectation. Write
$$  \widehat{E}^\omega[N_t] P^\omega(S) = E^\omega[N_t \mathbbm{1}_{S}] \leq E^\omega[N_t] . $$
We know from Proposition~\ref{prop1} that $0<P^\omega(S)<1$, and therefore,
\begin{equation} \label{eqsqueeze}
\widehat{E}^\omega[N_t] \leq \frac{E^\omega[N_t]}{P^\omega(S)}.
\end{equation}
By the Markov inequality, we then have
\begin{equation}
\widehat{P}^\omega\left(N_t>\exp\left[\beta t+\frac{(-c(d,\nu)+\varepsilon)t}{(\log t)^{2/d}}\right]\right)\leq \widehat{E}^\omega[N_t]\exp\left[-\beta t+\frac{(c(d,\nu)-\varepsilon)t}{(\log t)^{2/d}}\right], \nonumber
\end{equation}
which, along with \eqref{eqsqueeze} and Proposition~\ref{prop2} implies that
$$ \widehat{P}^\omega\left((\log t)^{2/d}\left(\frac{\log N_t}{t}-\beta\right)+c(d,\nu)>\varepsilon\right)\leq \exp\left[-\varepsilon t(\log t)^{-2/d}+o\left(t(\log t)^{-2/d}\right)\right] .$$
This proves the upper bound of the LLN in \eqref{eqthm1}.


\subsection{Proof of the lower bound}

The proof of the lower bound is split into three parts for better readability. Let $\varepsilon>0$. In what follows, in a typical environment $\omega$, we find an upper bound that is valid for large $t$ on
\begin{equation} 
\widehat{P}^\omega\left((\log t)^{2/d}\left(\frac{\log N_t}{t}-\beta\right)+c(d,\nu)<-\varepsilon\right)=\widehat{P}^\omega\left(N_t<\exp\left[t\left(\beta-\frac{c(d,\nu)+\varepsilon}{(\log t)^{2/d}}\right)\right]\right)  .  \nonumber
\end{equation}
Throughout the proof, we assume that $d\geq 2$ so that Lemma~\ref{lemma2} is applicable.

\bigskip

\textbf{\underline{Part 1}: Upper bound on exponentially few total mass}

For a Borel set $B\subseteq \mathbb{R}^d$ and $t\geq 0$, as before $Z_t(B)$ denotes the mass of $Z$ that fall inside $B$ at time $t$. In this part of the proof, we will show that on a set of full $\mathbb{P}$-measure, for any $0<\delta<\beta$, the event 
\begin{equation} \label{part1event}
A_t=\{\exists\:z_0=z_0(\omega)\in[-kt,kt]^d \:\:\text{such that}\:\: Z_t(B(z_0,r(t)))\geq e^{\delta t} \} ,
\end{equation}
with $r(t)$ as in \eqref{part1eqradius} and $k>\sqrt{2\beta}$, occurs with overwhelming $\widehat{P}^\omega$-probability. Observe that $A_t$ corresponds to producing exponentially many particles and keeping them close to each other and also to the origin at time $t$. The main ingredient in this part of the proof will be Lemma~\ref{lemma2}.

Let $0<\delta<\beta$, and choose $\alpha$ such that $0<\alpha<1-\delta/\beta$. Also, for concreteness, set $k=\sqrt{3\beta}$. Recall the definitions of $\Phi_t^\omega$ (the set of good points associated to the pair $(\omega,t)$) and $\widehat{\Phi}_t^\omega=\Phi_t^\omega \cap [-kt,kt]^d$ from \eqref{eqphi}. Split the interval $[0,t]$ into two pieces as $[0,\alpha t]$ and $[\alpha t,t]$. We will show that with overwhelming probability, a particle of the BBM hits a point in $\widehat{\Phi}_{\alpha t}^\omega$, say $z_0\in [-k\alpha t,k\alpha t]^d$, over $[0,\alpha t]$, and then the sub-BBM emanating from this particle starting at $z_0$ produces at least $e^{\delta t}$ particles over $[\alpha t,t]$ inside $B(z_0,r(\alpha t))$. 

For $t>0$, define the events
$$ E_t:=\{ \mathcal{R}(\alpha t)\cap \widehat{\Phi}_{\alpha t}^\omega \neq \emptyset \} .$$
Let $\tau=\tau(\omega)=\inf\{s>0:\mathcal{R}(s)\cap \widehat{\Phi}_{\alpha t}^\omega\neq\emptyset\}$ be the first time that $Z$ hits a good point within the cube $[-k\alpha t,k\alpha t]^d$ associated to the pair $(\omega,\alpha t)$. Observe that $E_t=\{\tau\leq \alpha t \}$. Estimate
\begin{align} 
P^\omega\left(A_t^c \cap S_{\alpha t}\right) &= P^\omega\left(A_t^c \cap S_{\alpha t} \cap E_t^c\right) + P^\omega\left(A_t^c \cap S_{\alpha t} \cap E_t\right) \nonumber \\
&\leq P^\omega\left(E_t^c \mid S_{\alpha t} \right) + P^\omega\left(A_t^c \mid E_t\right) . \label{eqnewnew}
\end{align}
By Lemma~\ref{lemma2}, on a set of full $\mathbb{P}$-measure, 
\begin{equation} \label{part1eq6}
\underset{t\to\infty}{\lim}P^\omega(\mathcal{R}(\alpha t)\cap \widehat{\Phi}_{\alpha t}^\omega=\emptyset \mid S_{\alpha t})=\underset{t\to\infty}{\lim}P^\omega(E_t^c \mid S_{\alpha t})=0.
\end{equation}
Conditional on $E_t=\{\tau\leq\alpha t\}$, let $z_0$ be the (random) point where $Z$ first hits $\widehat{\Phi}_{\alpha t}^\omega$. Now apply the strong Markov property of BBM at time $\tau$, and then apply Theorem C to the sub-BBM initiated at time $\tau$ from position $z_0$ by the particle that first hits $\widehat{\Phi}_{\alpha t}^\omega$. Note that $t-\tau\geq (1-\alpha)t$, and by definition of $\widehat{\Phi}_{\alpha t}^\omega$, $B(z_0,r(\alpha t))$ is a clearing with $z_0\in[-k\alpha t,k\alpha t]^d$. In detail, for $t>1$ let
$$ s:=(1-\alpha)t,\quad \hat{r}(s)=\frac{1}{3}\frac{R_0}{5^{1/d}}\left(\frac{2}{3}\right)^{1/d}\left[\log\log\left(\frac{\alpha s}{1-\alpha}\right)\right]^{1/d}, \quad B_s:=B(0,\hat{r}(s)).$$ 
Observe that $\hat{r}(s)=r(\alpha t)$. Also, $\delta t<(1-\alpha)\beta t=\beta s$ due to the choice $\alpha<1-\delta/\beta$. Next, let 
$$ p_s:=\mathbf{P}_{z_0}(\sigma_{B(z_0,\hat{r}(s))}\geq s) = \mathbf{P}_0\left(\sigma_{B_s}\geq s\right) , $$
where, as before, $\mathbf{P}_x$ denotes the law of a standard BM started at $x$, and $\sigma_A=\inf\{s\geq 0:X_s\notin A\}$ denotes the first exit time of the BM out of $A$. By a standard result on Brownian confinement in balls (see for instance Proposition B in \cite{O2021}),
\begin{equation} \nonumber
p_s=\exp\left[-\frac{\lambda_d s}{(\hat{r}(s))^2}(1+o(1))\right]. 
\end{equation}
Then, on a set of full $\mathbb{P}$-measure, Theorem C upon setting $\gamma_s=\exp[-\sqrt{\beta/2}\,\hat{r}(s)]$ for instance implies that for all large $t$,
\begin{equation} \label{part1eq7}
P^\omega\left(A_t^c \mid E_t \right) \leq P\left(\big | Z_s^{B_s} \big |< e^{\delta t}\right) \leq
P\left(\big | Z_s^{B_s} \big |< e^{-\sqrt{\beta/2}\,\hat{r}(s)} p_s e^{\beta s}\right) = e^{-\sqrt{\beta/2}\,\hat{r}(s)(1+o(1))},
\end{equation} 
where $A_t$ is as in \eqref{part1event}, $Z^B=(Z_u^{B_u})_{u\geq 0}$ is a BBM with deactivation at $\partial B$ as in Theorem C, and we have used in the first inequality that $B(z_0,r(\alpha t))$ is a clearing . Finally, in view of $s=(1-\alpha)t$, we reach the following conclusion via \eqref{eqnewnew}, \eqref{part1eq6} and \eqref{part1eq7}. On a set of full $\mathbb{P}$-measure, for all large $t$,
\begin{align} \label{part1eq8}
P^\omega\left(A_t^c \mid S \right)&=\frac{P^\omega(A_t^c\cap S)}{P^\omega(S)}\leq c(\omega) P^\omega(A_t^c\cap S) \nonumber \\
&\leq c(\omega) P^\omega(A_t^c\cap S_{\alpha t}) \to 0,\:\:\: t\to\infty.
\end{align}
where we have used Proposition~\ref{prop1} in the first inequality, and that $S\subseteq S_{r}$ for any $r>0$ in the second inequality. This completes the first part of the proof of the lower bound of Theorem~\ref{thm1}. 


\bigskip

\textbf{\underline{Part 2}: Time scales within $[0,t]$ and moving a particle into a large clearing}

Introduce two time scales, $m(t)$ and $\ell(t)$, where $m(t)=o(t)$ and $\ell(t)\log \ell(t)=o(m(t))$. We will split the time interval $[0,t]$ into three pieces: $[0,m(t)]$, $[m(t),m(t)+\ell(t)]$ and $[m(t)+\ell(t),t]$. (This way of splitting $[0,t]$ is different from the corresponding splitting of $[0,t]$ used in \cite{E2008} and \cite{O2021} for the proofs of the mild obstacle problem.) More precisely, let $\ell,m:\mathbb{R}_+\to\mathbb{R}_+$ be such that
\begin{enumerate}
	\item[(i)] $\lim_{t\rightarrow\infty}\ell(t)=\infty$,
	\item[(ii)] $\lim_{t\rightarrow\infty} \frac{\log t}{\log \ell(t)}=1$,
	\item[(iii)] $\ell(t)\log \ell(t)=o(m(t))$,
	\item[(iv)] $m(t)=o(\ell^2(t))$,
	\item[(v)] $m(t)=o(t(\log t)^{-2/d})$.
\end{enumerate} 
In this part of the proof, our goal is to show that on a set of full $\mathbb{P}$-measure with overwhelming $\widehat{P}^\omega$-probability, at time $m(t)+\ell(t)$ there is a particle within distance $1$ of the center, say $x_0$, of a large clearing. This can be achieved by combining two partial strategies as follows. Firstly, sufficiently many particles are produced over $[0,m(t)]$ and kept close to each other at time $m(t)$, and then at least one of the sub-BBMs initiated by these particles at time $m(t)$ contributes a particle to $B(x_0,1)$ at time $m(t)+\ell(t)$. 

\medskip

\noindent \textbf{Partial strategy 1.} Let $0<\delta<\beta$ and $I(t)=\lfloor e^{\delta m(t)} \rfloor$. Then, since $\lim_{t\to\infty}m(t)=\infty$, it follows from \eqref{part1event} and \eqref{part1eq8} that on a set of full $\mathbb{P}$-measure,
\begin{equation} \label{part2eq1}
\underset{t\to\infty}{\lim}P^\omega(A_{m(t)}^c \mid S) = 0,
\end{equation}   
where
\begin{equation}
A_{m(t)}=\left\{\exists\,z_0=z_0(\omega)\in[-k m(t),k m(t)]^d \:\:\text{with}\:\: Z_{m(t)}\left(B(z_0,c[\log\log m(t)]^{1/d})\right)\geq I(t)\right\}  \label{eqamt}
\end{equation}
with $0<c<R_0$ (see \eqref{part1eqradius}). We now choose our `new origin' as the point $z_0$ in \eqref{eqamt} for the rest of the proof, that is, for the evolution of the system over $[m(t),t]$. 




\medskip

\noindent \textbf{Partial strategy 2.} Let $z_0=z_0(\omega)\in[-k m(t),k m(t)]^d$ be as in \eqref{eqamt}. By a similar argument as in the proof of Proposition~\ref{prop3}, it follows from a close-packing of $[-k m(t),k m(t)]^d$ by balls of radius $\ell(t)/(2\sqrt{d})$ together with \cite[Lemma 1]{O2021} (which is an extension of Proposition B) upon choosing $n=d+1$, $a=1$, and $\ell=\ell(t)/(2\sqrt{d})$ therein that on a set of full $\mathbb{P}$-measure, for all large $t$ any ball of radius $\ell(t)$ centered within $[-k m(t),k m(t)]^d$ contains a clearing of radius  
\begin{equation}
R(t)+1=R_{\ell(t)}+1 \asymp R_0[\log(\ell(t))]^{1/d} \asymp R_0[\log t]^{1/d},  \label{eqbiggyradius}
\end{equation}
where $R_{\ell(t)}$ is as in \eqref{eq0}, and we have used assumption (ii). In \eqref{eqbiggyradius} and hereafter, we use $f(t)\asymp g(t)$ to mean $f(t)/g(t)\to 1$ as $t\to\infty$. To see why the choice $n=d+1$ in \cite[Lemma 1]{O2021} works, observe that the number of balls of radius $\ell(t)/(2\sqrt{d})$ needed to completely pack $[-k m(t),k m(t)]^d$ is at most
$$ \left\lceil \frac{k m(t)}{\ell(t)/(2\sqrt{d})} \right\rceil^d \leq  \left(\frac{k\ell^2}{\ell}\right)^d \leq \ell^{d+1} $$
for all large $t$, where we have used assumption (iv) in the first inequality. In particular, on a set of full $\mathbb{P}$-measure $B(z_0(\omega),\ell(t))$ contains a clearing of radius $R(t)+1$ for all large $t$. Let $x_0=x_0(\omega)$ denote the center of this clearing. We will next show that on a set of full $\mathbb{P}$-measure the event
$$ C_t:=\{\exists\,x_0=x_0(\omega)\in\mathbb{R}^d \:\:\text{with}\:\: Z_{m(t)+\ell(t)}(B(x_0,1))>0 \:\:\text{and}\:\: B(x_0,R(t)+1)\subseteq K^c\}  $$
occurs with overwhelming $P^\omega$-probability conditional on $A_{m(t)}$. 



Consider a particle inside $B(z_0,c[\log\log m(t)]^{1/d})$ at time $m(t)$, and call it generically particle $u$. Let $q_u(t)$ be the probability that the sub-BBM initiated by $u$ at time $m(t)$ contributes a particle to $B(x_0,1)$ at time $m(t)+\ell(t)$. For an upper bound on $q_u(t)$, we consider the worst case scenario: 
\begin{enumerate}
	\item[(a)] assume that $u$ is located at the boundary of $B(z_0,c[\log\log m(t)]^{1/d})$ at time $m(t)$ in the opposite direction of $x_0$ with respect to the `origin' $z_0$,
	\item[(b)] neglect possible branching of $u$ over $[m(t),m(t)+\ell(t)]$,
	\item[(c)] assume that the Brownian path initiated by $u$ travels through the trap field $K$ over the entire interval $[m(t),m(t)+\ell(t)]$. 
\end{enumerate}	
By \cite[Lemma 4.5.2]{S1998}, on a set of full $\mathbb{P}$-measure, we have  
\begin{equation} 
\sup_{[z_0-\ell(t),z_0+\ell(t)]^d} V(\:\cdot\:,\omega)= o(\log\ell(t)), \quad t\to\infty.  \nonumber
\end{equation}
Then, for each particle $u$ that is inside $B(z_0,c[\log\log m(t)]^{1/d})$ at time $m(t)$, in view of the inequalities $c[\log\log m(t)]^{1/d}\leq \ell(t)$ and $|x_0-z_0|\leq \ell(t)$, we have for all large $t$,
\begin{equation} \label{eq7}
q_u(t)\geq \exp\left[-\frac{(2\ell(t))^2}{2\ell(t)}(1+o(1))-\ell(t)\log\ell(t)\right] \geq \exp\left[-2\ell(t)\log\ell(t)\right] =: p(t),
\end{equation}
where the first term in the exponent on the right-hand side comes from a linear Brownian displacement and the second term from surviving the killing over $[m(t),m(t)+\ell(t)]$. Then, by the Markov property and the independence of particles present at time $m(t)$, we have
\begin{equation} \label{eq8}
P^\omega(C_t^c \mid A_{m(t)}) \leq (1-p(t))^{I(t)} = e^{-p(t) I(t)},  
\end{equation}
where we have used that $1+x\leq e^x$. Note that in order to keep the probability of the unwanted event $C_t^c$ small, we aimed at a high enough $p(t) I(t)$ throughout the argument. Finally, by \eqref{eq7} and \eqref{eq8}, we reach the following conclusion. On a set of full $\mathbb{P}$-measure, for all large $t$,
\begin{equation} \label{eq9}
P^\omega(C_t^c \mid A_{m(t)}) \leq \exp\left[-e^{-2\ell(t)\log\ell(t)+\delta m(t)}\right] .
\end{equation}
Since $\delta>0$ and due to the assumptions (ii) and (iii), the right-hand side of \eqref{eq9} is superexponentially small in $t$.

\bigskip

\textbf{\underline{Part 3}: BBM in the large clearing}

This part of the proof is similar to the corresponding part as for the mild obstacle case, because over the remaining time interval $[m(t)+\ell(t),t]$ the BBM grows freely inside the clearing $B(x_0,R(t)+1)$, and hence is insensitive to the nature of the traps. For $t>0$, define the events
$$ D_t:=\left\{\exists\,x_0=x_0(\omega)\in\mathbb{R}^d \:\:\text{with}\:\: Z_t\left(B(x_0,R(t)+1)\right)\geq \exp\left[t\left(\beta-\frac{c(d,\nu)+\varepsilon}{(\log t)^{2/d}}\right)\right]\right\}.  $$
We argue as follows. Let $\Omega_0$ be the set of environments for which $0<P^\omega(S)<1$. We know from Proposition~\ref{prop1} that $\mathbb{P}(\Omega_0)=1$. For each $\omega\in\Omega_0$, set $c(\omega)=1/P^\omega(S)$, and estimate
\begin{align} 
P^\omega(D_t^c \mid S) &= \frac{1}{P^\omega(S)}\left[P^\omega(D_t^c\cap S\cap C_t)+P^\omega(D_t^c\cap S\cap C_t^c)\right] \nonumber\\
&\leq c(\omega) \left[P^\omega(D_t^c\cap S_t\cap C_t)+P^\omega(D_t^c\cap S\cap C_t^c)\right] \nonumber \\
&\leq c(\omega) \left[P^\omega(D_t^c \mid C_t)+P^\omega(C_t^c \mid S)\right], \label{eq4}
\end{align}
where we have used that $S\subseteq S_t$ in the first inequality. The second term on the right-hand side of \eqref{eq4} can be estimated as follows for $\omega\in\Omega_0$:
\begin{align} 
P^\omega(C_t^c \mid S) &= \frac{1}{P^\omega(S)}\left[P^\omega\left(C_t^c\cap S\cap A_{m(t)}\right)+P^\omega\left(C_t^c\cap S\cap A_{m(t)}^c\right)\right] \nonumber \\
&\leq c(\omega)\left[P^\omega\left(C_t^c \mid A_{m(t)}\right)+P^\omega\left(A_{m(t)}^c \mid S\right)\right] \nonumber.
\end{align}
It then follows from \eqref{part2eq1} and \eqref{eq9} that on a set of full $\mathbb{P}$-measure, 
\begin{equation}
\lim_{t\to\infty}P^\omega(C_t^c \mid S)=0 \label{eqfok}.
\end{equation}
Next, we turn our attention to $P^\omega(D_t^c\mid C_t)$ in \eqref{eq4} and show that on a set of full $\mathbb{P}$-measure,
\begin{equation} 
\underset{t\to\infty}{\lim} P^\omega(D_t^c \mid C_t) = 0 . \label{eq11}
\end{equation} 
Conditional on the event $C_t$, let $v$ be the name of the particle that is closest to $x_0$ at time $m(t)+\ell(t)$ and $y_0$ denote its position at time $m(t)+\ell(t)$. Note that $|y_0-x_0|\leq 1$ on the event $C_t$. We will show via Proposition C that sufficiently many particles are produced inside $B(y_0,R(t))$ over the remaining interval $[m(t)+\ell(t),t]$. Let $\widehat{Z}$ be the sub-BBM initiated by particle $v$ at time $m(t)+\ell(t)$ starting from position $y_0$. Define $\widehat{R}:\mathbb{R}_+\to\mathbb{R}_+$ such that $\widehat{R}(t-(m(t)+\ell(t)))=R(t)$ for all large $t$. (Respecting the conditions (i)-(v), we may and do choose $m(t)$ and $\ell(t)$ such that $t-(m(t)+\ell(t))$ is increasing on $t\geq t_0$ for some $t_0>0$. Therefore, $t_1-(m(t_1)+\ell(t_1))=t_2-(m(t_2)+\ell(t_2))$ implies that $t_1=t_2$ for $t_1\wedge t_2\geq t_0$.) Next, let $s:=t-(m(t)+\ell(t))$, $\widehat{B}_s:=B(y_0,R(t))$ and
$$ p_s:=\mathbf{P}_{y_0}(\sigma_{\widehat{B}_s}\geq s) = \mathbf{P}_0\left(\sigma_{B(0,\widehat{R}(s))}\geq s\right) . $$ 
By the Markov property of $Z$ applied at time $m(t)+\ell(t)$, $\widehat{Z}$ is a BBM started with a single particle at $y_0$. Observe that $\widehat{B}_s$ is a clearing since $\widehat{B}_s\subseteq B(x_0,R(t)+1)$. Then, Theorem C upon setting $\gamma_s=\exp[-\sqrt{\beta/2}\widehat{R}(s)]$ implies that 
\begin{align}  
P^\omega\left(|\widehat{Z}_s|< e^{-\sqrt{\beta/2}\,\widehat{R}(s)} p_s e^{\beta s} \:\big|\: C_t \right)&\leq P_{y_0}\left(\big| Z_s^{\widehat{B}_s}\big|< e^{-\sqrt{\beta/2}\,\widehat{R}(s)} p_s e^{\beta s}\right) \nonumber \\
&=\exp\left[-\sqrt{\beta/2}\,\widehat{R}(s)(1+o(1))\right]. \label{eq309}
\end{align}
By \eqref{eqbiggyradius} and a standard result on Brownian confinement in balls (see for instance Proposition B in \cite{O2021}), and since $\widehat{R}(s)=R(t)$ and $c(d,\nu)=\lambda_d/R_0^2$,  
\begin{equation} \label{eq310}
p_s=\exp\left[-\frac{\lambda_d s}{\widehat{R}^2(s)}(1+o(1))\right]=\exp\left[-\frac{c(d,\nu)(t-(m(t)+\ell(t)))}{(\log \ell(t))^{2/d}}(1+o(1))\right]. 
\end{equation}
It follows from the assumptions (ii), (iii) and (v) that
$$ \frac{t-(m(t)+\ell(t))}{(\log \ell(t))^{2/d}} \asymp \frac{t}{(\log t)^{2/d}} , $$
by which we can continue \eqref{eq310} with
\begin{equation}
p_s=\exp\left[-\frac{c(d,\nu)t}{(\log t)^{2/d}}(1+o(1))\right].   \label{eq311}
\end{equation}
On the other hand, using that $s=t-(m(t)+\ell(t))$, we have for any $\varepsilon>0$,
$$  \exp\left[t\left(\beta-\frac{c(d,\nu)+\varepsilon}{(\log t)^{2/d}}\right)\right]=\exp\left[\beta s+\beta(m(t)+\ell(t))-\frac{(c(d,\nu)+\varepsilon)t}{(\log t)^{2/d}}\right]\leq e^{-\sqrt{\beta/2}\,\widehat{R}(s)} p_s e^{\beta s}  $$
for all large $t$, where we have used \eqref{eq311}, assumption (v), and that $\widehat{R}(s)=R(t)=o(t(\log t)^{-2/d})$ in passing to the inequality. Then, it follows from \eqref{eq309} and the definitions of $D_t$ and $\widehat{Z}$ that for all large $t$, 
$$ P^\omega(D_t^c \mid C_t) \leq P^\omega\left(|\widehat{Z}_s|< e^{-\sqrt{\beta/2}\,\widehat{R}(s)} p_s e^{\beta s} \:\big|\: C_t \right) \leq \exp\left[-\sqrt{\beta/2}\,\widehat{R}(s)(1+o(1))\right]   . $$
This proves, \eqref{eq11} holds on a set of full $\mathbb{P}$-measure, which, together with \eqref{eq4} and \eqref{eqfok}, imply that 
\begin{equation} \nonumber
P^\omega\left(N_t<\exp\left[t\left(\beta-\frac{c(d,\nu)+\varepsilon}{(\log t)^{2/d}}\right)\right] \:\bigg\vert\: S \right)\leq P^\omega(D_t^c \mid S)\to 0, \quad t\to\infty .
\end{equation} 
This completes the proof of the lower bound of Theorem~\ref{thm1}. We emphasize that over $[m(t)+\ell(t),t]$, the sub-BBM starting from $y_0$ at time $m(t)+\ell(t)$ and deactivated at $\partial B(y_0,R(t))$ doesn't feel the effect of traps; so for this part of the proof it doesn't matter whether traps have a killing mechanism or not.

\section{Further problems}\label{section8}

We conclude by discussing several further problems related to our model.

\bigskip

\noindent\textbf{Problem 1: The case $d=1$}. 

\medskip

We emphasize that the LLN for the case $d=1$ remains open in the soft obstacle setting. Here, we briefly explain why the current method fails when $d=1$.

We start by considering Lemma~\ref{lemma2}. Observe that the key estimate in the proof of Lemma~\ref{lemma2} is \eqref{eqkey}, where the right-hand side gives the cost of a hitting strategy to a moderate clearing, and involves a tubular estimate and an estimate on survival from killing over the interval $[0,h(t)]$. Note that in $d=1$, we do not need a full tubular estimate, but we still need a single Brownian motion to travel a distance of $\sim\rho(t)$ over a time interval of length $h(t)$. Even if we ignore the `tubular estimate' contribution in \eqref{eqkey}, we still have the factor $\exp[-h(t)\log\rho(t)]$, which is solely due to the survival from soft killing. Then, to show that $[P^\omega(E_{1,t})]^{t/h(t)}$ tends to zero as $t\to\infty$, at the very least we need
$$ \left[1-c_d e^{-h(t)\log\rho(t)}\right]^{t/h(t)}\leq \exp\left[-c_d e^{-h(t)\log\rho(t)}\frac{t}{h(t)}\right] \to 0 ,$$
which is true only if
\begin{equation} \label{eqneed}
t e^{-h(t)\log\rho(t)} \to \infty.
\end{equation}
An elementary inspection shows that the choices $h(t)=k_1\log t$ and $\rho(t)=k_2\log t$ will not satisfy \eqref{eqneed} no matter how small $k_1$ and $k_2$ are, and therefore for \eqref{eqneed} to hold one needs to choose $h(t)=o(\log t)=\rho(t)$ as $t\to\infty$.

We now turn our attention to the preparation of the a.s.-environment, which is based on securing a clearing of radius $\sim r(t)$ within each ball of radius $\sim\rho(t)$, where the union of the balls covers the box $[-kt,kt]^d$ (see Proposition~\ref{prop3}). For this argument to hold, we need such an $r(t)$-clearing inside each of 
\begin{equation} 
 \sim t/\rho(t)   \nonumber
\end{equation}
many balls. As we set $\ell=\rho(t)/2$, let us take a close look at Lemma~\ref{lemma1}. Observe that Lemma~\ref{lemma1} becomes stronger and more difficult to prove as each of $R_\ell$ and $f(\ell)$ are chosen larger. We now argue that the current proof of Lemma~\ref{lemma1} breaks down in $d=1$ in view of $\ell=\rho(t)/2$ under the requirement that $\rho(t)=o(\log t)$. Let us simply set $R_\ell=R$ for some constant $R>0$ to make the proof easier. Even in this case, the estimate \eqref{eq3lemma3} when $d=1$ yields
$$  \log \alpha_\ell \geq \log\ell-\log(2R)-R/R_0 .$$
Even if we ignore the middle term, this leads to
$$ e^{-\alpha_\ell}\leq \exp\left[-e^{(\log\ell-R/R_0)}\right]\leq \exp\left[-\ell e^{-R/R_0}\right]=\left(e^{-\ell}\right)^{e^{-R/R_0}}. $$
Then, for the Borel-Cantelli argument based on \eqref{borelcantelli} to hold, we can have $f(\ell)$ growing at most exponentially in $\ell$ (see \eqref{eq1lemma3} and \eqref{borelcantelli}). On the other hand, it is easy to see that when $\rho(t)=o(\log t)$, setting $\ell=\rho(t)/2$ followed by $f(\ell)\sim t^{d}/\rho(t)^d$ requires $f(\ell)$ to be superexponentially large in $\ell$.  

We may summarize our findings as follows. The current method fails when $d=1$, because when $d=1$ the estimate $[P^\omega(E_{1,t})]^{t/h(t)}$ in Lemma~\ref{lemma2} is incompatible with the Borel-Cantelli argument in the proof of Lemma~\ref{lemma1}, which is used to prepare the a.s.-environment for the soft obstacle problem. When $d=1$, there is no pair of choices for the time scale $h(t)$ and the space scale $\rho(t)$ under which both Lemma~\ref{lemma1} and Lemma~\ref{lemma2} hold upon setting $\ell=\rho(t)/2$.

\bigskip

\noindent\textbf{Problem 2: SLLN.}

\medskip

It would be desirable to improve the LLN in Theorem~\ref{thm1} to the corresponding SLLN if possible. To achieve this, one must control the probabilities of the `unwanted' events in the proof of the lower bound so that a Borel-Cantelli argument could be carried out to obtain the lower bound of the desired SLLN. Recall that Lemma~\ref{lemma2} was the key component in the proof of the lower bound of Theorem~\ref{thm1}. Therefore, a first and important step would be to improve Lemma~\ref{lemma2} to give a lower bound on the rate of decay to zero of $P^\omega\left(\mathcal{R}(t)\cap \widehat{\Phi}_t^\omega=\emptyset \:\big\vert\: S_t\right)$ as $t\to\infty$.

\bigskip

\noindent\textbf{Problem 3: Dominant region.}

\medskip

The current work, as well as \cite{E2008} and \cite{O2021}, study the total mass of the BBM among random obstacles, but don't make any claims on the geometric distribution of particles for large times although the proofs suggest that for large times `most' of the particles are to be found in `large' clearings which exist in a.e.-environment. 

Hence, a further problem concerns the geometric distribution of particles at large times. One natural question is, conditional on ultimate survival of the BBM, is there a \emph{dominant region} $B=B(\omega,t)$ in $\mathbb{R}^d$ such that an overwhelming proportion of particles are found inside $B$ for all large $t$. Recall that we write $Z_t(B)$ to denote the mass of $Z$ that fall inside $B$ at time $t$. More precisely, given an environment $\omega$ does there exist a region $B(\omega,t)\subset \mathbb{R}^d$ such that 
$$ \frac{Z_t(B(\omega,t)^c)}{N_t} \to 0, \quad t\to\infty $$
in some sense of convergence with respect to the law $\widehat{P}^\omega(\:\cdot\:)=P^\omega(\:\cdot\:\mid S)$ for a.e.\ $\omega$? If yes, this would mean that on a set of full $\mathbb{P}$-measure the mass accumulates in some $\omega$-dependent special subset of $\mathbb{R}^d$ for large times. This special subset, for instance, could be of a similar form as $\Phi_t^\omega$ in \eqref{eqphi} with a suitable radius function $r(t)$. We note that a similar problem in the setting of mild obstacles was listed as a further problem in \cite{E2008}. 

\bigskip

\noindent\textbf{Problem 4: Lower large-deviations.}

\medskip

In the course of the proof of the lower bound of Theorem~\ref{thm1}, we show that the rare event of atypically small mass for the BBM has probability decaying to zero as $t\to\infty$, but we do not find the rate of decay to zero of this probability. It is natural to look for this rate of decay and hence to obtain a precise lower-tail asymptotics for the mass of the BBM. That is, can we obtain an asymptotic result as $t\to\infty$ in the form
$$ P^\omega\left(N_t<\exp{\left[t\left(\beta-\frac{c(d,\nu)+\varepsilon}{(\log t)^{2/d}}\right)\right]}\right) = e^{-g(t)(1+o(1))}$$
that is valid in a.e.-environment, where the function $g=g_\varepsilon:\mathbb{R}_+\to\mathbb{R}_+$ with $\lim_{t\to\infty}g(t)=\infty$ is precisely identified?  

\bigskip

\noindent\textbf{Problem 5: Hard obstacles.}

\medskip

Let $\Pi$ be a Poisson point process in $\mathbb{R}^d$ with constant intensity $\nu>0$ as before, and consider the Poissonian trap field
\begin{equation}
K=K(\omega)=\bigcup_{x_i\in\:\text{supp}(\Pi)} \bar{B}(x_i,a)  \nonumber
\end{equation}
as in \eqref{eqtrapfield}, where the trap radius $a>0$ is fixed. The \emph{hard killing} rule for BBM is that each particle branches at the normal rate $\beta$ when outside $K$, and is immediately killed upon hitting $K$. This model may also be viewed as a BBM with individual killing at the boundary of the random set $K$. One can show, similar to the case of soft obstacles, that on a set of full $\mathbb{P}$-measure the entire BBM is killed with positive $P^\omega$-probability, and for meaningful results concerning the total mass one works under the law $\widehat{P}^\omega(\:\cdot\:)=P^\omega(\:\cdot\:\mid S)$.

It is observed from \cite[Theorem 1]{E2008} and \cite[Theorem 1]{O2021}, and then from the current work that the LLN for the total mass of BBM among random obstacles is quite robust to the details of the mass-reducing mechanism coming from the trap field, whether traps simply reduce the branching rate, or completely suppress the branching, or even apply soft killing to the particles. Therefore, it is reasonable to expect a similar LLN to hold even in the hard obstacle setting. 

It is known from the theory of site percolation that there exists $a_0>0$ such that when the trap radius $a$ satisfies $a\leq a_0$, a unique infinite trap-free component exists in a.e.-environment (see \cite{AKN1987} and \cite{S1993}). Here, a trap-free component refers to a connected region in $\mathbb{R}^d$ in which there is no atom of $\omega$. Let us denote by $\mathcal{C}$ this unique $\omega$-dependent infinite trap-free component, and call $A\subseteq\mathbb{R}^d$ \emph{accessible} if $A\subseteq\mathcal{C}$. Then, denoting the origin by $\mathbf{0}$, the conditions under which we may expect a LLN to hold (see \cite{S1993}) are as follows:
$$ \mathbb{P}-\text{a.s. on the set} \:\: \{\mathbf{0}\in\mathcal{C}\} \:\:  \text{and when} \:\: a\leq a_0 .$$

Note that in all cases of random Poissonian traps, the upper bound of the LLN is obtained by a first moment argument, and is unaffected by the nature of the traps since the first moment formula for the mass of BBM remains the same to the leading order due to the robustness of the single-particle Brownian survival asymptotics among the traps (see \cite{S1993}). In contrast, it is clear that the proof of the lower bound of the LLN becomes more difficult as the severity of the trapping mechanism increases in the following order: mild traps with a lower but positive rate of branching, mild traps with zero branching, traps with soft killing, and finally hard traps. 

In case of hard traps, the main extra challenge is due to the fact that although the concentration of moderate clearings close enough to the origin (within $[-kt,kt]^d$ for suitable $k>0$) is still high enough and there is at least one large clearing close enough to the origin just as in the case of soft obstacles, there is no guarantee that these clearings will be accessible to the BBM as they could be outside the unique infinite trap-free component. Therefore, all a.s.-clearings established in the proofs should further be qualified as accessible clearings.

\section*{Acknowledgements} The author would like to thank Dr.\ J\'anos Engl\"ander for fruitful discussions over the course of the writing of this manuscript.

\bibliographystyle{plain}

\end{document}